 \def\commutatif{\ar@{}[rd]|{\circlearrowleft}}
\newcommand{\eq}[1][r]
   {\ar@<-3pt>@{-}[#1]
    \ar@<-1pt>@{}[#1]|<{}="gauche"
    \ar@<+0pt>@{}[#1]|-{}="milieu"
    \ar@<+1pt>@{}[#1]|>{}="droite"
    \ar@/^2pt/@{-}"gauche";"milieu"
    \ar@/_2pt/@{-}"milieu";"droite"}
\newtheorem{thm}{Theorem}[section]
\newtheorem{pro}[thm]{Proposition}
\newtheorem{lem}[thm]{Lemma}
\newtheorem{cor}[thm]{Corollary}
\newtheorem{rem}[thm]{Remark}
\newtheorem{nota}[thm]{Notations}
\newtheorem{df}[thm]{Definition}
\newtheorem{rmk}[thm]{Remark}
\newtheorem{ex}[thm]{Example}
\newcommand{\cA}{{\mathcal A}}
\newcommand{\cB}{{\mathcal B}}
\newcommand{\cC}{{\mathcal C}}
\newcommand{\cE}{{\mathcal E}}
\newcommand{\cF}{{\mathcal F}}
\newcommand{\cG}{{\mathcal G}}
\newcommand{\cH}{{\mathcal H}}
\newcommand{\cK}{{\mathcal K}}
\newcommand{\cL}{{\mathcal L}}
\newcommand{\cM}{{\mathcal M}}
\newcommand{\cX}{{\mathcal X}}
\newcommand{\CC}{{\mathbb C}}
\newcommand{\EE}{{\mathbb E}}
\newcommand{\NN}{{\mathbb N}}
\newcommand{\uc}{{\mathbb S}^1}
\newcommand{\TT}{{\mathbb T}}
\newcommand{\ZZ}{{\mathbb Z}}
\newcommand{\id}{{\mathbf 1}}
\newcommand{\Id}{{\text{id}}}
\newcommand{\wGa}{{\widetilde \Gamma}}
\newcommand{\Gpdo}{{\mathcal{G}^{(0)}}}
\newcommand{\grpd}{\xymatrix{\cG \dar[r]^r_s & \Gpdo}}
\newcommand{\gamgpd}{\xymatrix{\Ga \dar[r]^r_s & \Ga^{(0)}}}
\newcommand{\Gamo}{{\Gamma^{(0)}}}
\newcommand{\Mo}{{\mathcal{M}^{(0)}}}
\def\dar[#1]{\ar@<2pt>[#1]\ar@<-2pt>[#1]}
\newcommand{\fm}{{\mathfrak m}}
\newcommand{\fr}{{\mathfrak r}}
\newcommand{\fs}{{\mathfrak s}}
\newcommand{\sfH}{{\mathsf H}}
\newcommand{\To}{\longrightarrow}
\newcommand{\mto}{\longmapsto}
\newcommand{\cstar}{C^{\ast}}
\newcommand{\Ga}{\Gamma}
\newcommand{\al}{\alpha}
\newcommand{\ph}{\phi}
\newcommand{\g}{\gamma}
\newcommand{\Lam}{\Lambda}
\newcommand{\Co}{{\mathcal C}_0}
\def\<{\langle}
\def\>{\rangle}
\let\ipscriptstyle=\scriptscriptstyle
\def\lipsqueeze{{\mskip -3.0mu}}
\def\ripsqueeze{{\mskip -3.0mu}}
\def\ipcomma{\nobreak\mathrel{,}\nobreak}
\newbox\ipstrutbox
\def\ipstrut{\copy\ipstrutbox}
\def\lip#1<#2,#3>{\mathopen{\relax_{\ipstrut\ipscriptstyle{
#1}}\lipsqueeze
\langle} #2\ipcomma #3 \rangle}
\def\blip#1<#2,#3>{\mathopen{\relax_{\ipstrut
\ipscriptstyle{ #1}}\lipsqueeze\bigl\langle} #2\ipcomma #3 \bigr\rangle}
\def\rip#1<#2,#3>{\langle #2\ipcomma #3
\rangle_{\ripsqueeze\ipstrut\ipscriptstyle{#1}}}
\def\brip#1<#2,#3>{\bigl\langle #2\ipcomma #3
\bigr\rangle_{\ripsqueeze\ipstrut\ipscriptstyle{#1}}}
\def\angsqueeze{\mskip -6mu}
\def\smangsqueeze{\mskip -3.7mu}
\def\trip#1<#2,#3>{\langle\smangsqueeze\langle #2\ipcomma #3
\rangle\smangsqueeze\rangle_{\ripsqueeze\ipstrut\ipscriptstyle{#1}}}
\def\btrip#1<#2,#3>{\bigl\langle\angsqueeze\bigl\langle #2\ipcomma
#3
\bigr\rangle
\angsqueeze\bigr\rangle_{\ripsqueeze\ipstrut\ipscriptstyle{#1}}}
\def\tlip#1<#2,#3>{\mathopen{\relax_{\ipstrut\ipscriptstyle{
#1}}\lipsqueeze \langle\smangsqueeze\langle} #2\ipcomma #3
\rangle\smangsqueeze\rangle}
\def\btlip#1<#2,#3>{\mathopen{\relax_{\ipstrut\ipscriptstyle{
#1}}\lipsqueeze
\bigl\langle\angsqueeze\bigl\langle} #2\ipcomma #3
\bigr\rangle\angsqueeze\bigr\rangle}
\def\ip(#1|#2){(#1\mid #2)}
\def\bip(#1|#2){\bigl(#1 \mid #2\bigr)}
\def\Bip(#1|#2){\Bigl( #1 \bigm| #2 \Bigr)}
\def\h[#1,#2]{[#1,#2]_{H}}
\newcommand{\mydot}{\mathbin{:}}
\newcommand{\Lip}{\lip \scriptstyle\star}
\newcommand{\Rip}{\rip \scriptstyle\star}
\newcommand{\tLip}{\tlip \scriptstyle\star}
\newcommand{\tRip}{\trip \scriptstyle\star}
\newcommand{\X}{\mathsf{X}}
\def\ipp(#1|#2){\ip({#1}|{#2})_{\pi}}
\title{Equivalence of Fell Systems and their Reduced $\cstar$-Algebras}
\author{El-ka\"ioum M. Moutuou}
\thanks{The first author is supported by the German Research Foundation (DFG) and the Universit\'e franco-allemande (DFH-UFA) via the \emph{International Research Training Group} 1133 "Geometry and Analysis of Symmetries"}
\address{Department of Mathematics\\
Paderborn University\\
Warburger Str. 100\\
D-33098 Paderborn, Germany, and
} 
\address{Universit\'e Paul Verlaine - Metz\\
LMAM - CNRS UMR 7122\\
B\^atiment A, Ile du Saulcy\\
57000 Metz, France}
\email{moutuou@math.upb.de}
\author{Jean-Louis Tu}
\address{Universit\'e Paul Verlaine - Metz\\
LMAM - CNRS UMR 7122\\
B\^atiment A, Ile du Saulcy\\
57000 Metz, France}
\email{tu@univ-metz.fr}
\begin{document}

\maketitle

\begin{abstract}
This paper is aimed at investigating links between Fell bundles over Morita equivalent groupoids and their corresponding reduced $\cstar$-algebras. Mainly, we review the notion of \emph{Fell pairs} over a Morita equivalence of groupoids, and give the analogue of the Renault's Equivalence Theorem for the reduced $\cstar$-algebras of \emph{equivalent} Fell systems. Eventually, we will use this theorem to connect the reduced $\cstar$-algebra of an $\uc$-central groupoid extension to that of its associated Dixmier-Douady bundle.
\end{abstract}

\section*{Introduction}
A \emph{Fell system} consists of a pair $(\cG,\cE)$, where $\cE$ is a Fell bundle over the groupoid $\cG$. The notion of (Morita) equivalence of Fell systems was first introduced by S. Yamagami in ~\cite{Yam}, and by then it was studied by Muhly in ~\cite{Muh} and very recently by Muhly and Williams in ~\cite{MW2} where the authors prove that if $(\Ga,\cF)$ and $(\cG,\cE)$ are equivalent, then their full $\cstar$-algebras $\cstar(\Ga;\cF)$ and $\cstar(\cG;\cE)$ are Morita equivalent (see ~\cite[Theorem 11]{Muh}, and ~\cite[Theorem 6.4]{MW2}). However, it has not been known so far whether an equivalence of Fell systems gives rise to a Morita equivalence between the associated reduced $\cstar$-algebras.    \

 The first motivation of our work came from twisted $K$-theory: to every groupoid $\cG$ and every cocycle $\alpha\in \check{C}^2(\cG_\bullet,\TT)$ is associated a Fell system $(\Ga_\al,L_\al)$, and the twisted $K$-groups $K_\alpha^\ast(\cG)$ are defined as the $\cstar$-algebraic $K$-groups of the reduced $\cstar$-algebra $\cstar_r(\Ga_\al,L_\al)$ (cf. ~\cite{TXL}). Moreover, it is known that when $\al\sim \beta$, then not only $\Ga_\al$ is Morita equivalent to $\Ga_\beta$ but also the associated reduced $\cstar$-algebras $\cstar_r(\Ga_\al,L_\al)$ and $\cstar_r(\Ga_\beta,L_\beta)$ are Morita equivalent (see ~\cite[Proposition 3.3]{TXL}); so that $K_\al^\ast(\cG)\cong K_\beta^\ast(\cG)$. This has led us to a generalisation of the so-called Renault's equivalence Theorem for reduced groupoid $\cstar$-algebras (~\cite[Theorem 13]{SW}) to Fell systems.\
 
 We recall from ~\cite{TXL} some concepts related to groupoids such as \emph{generalized homomorphisms} and Dixmier-Douady bundles in \S1, and we review the basics of Fell systems and their reduced $\cstar$-algebras from ~\cite{Kum} and ~\cite{TXL} in \S2. In \S3, we discuss the notion of equivalence of Fell systems of  ~\cite{Muh} and ~\cite{MW2} from another formalism that better suits with the construction of the linking Fell systems introduced in \S4. The equivalence theorem for the reduced $\cstar$-algebras of Fell systems is proved in \S5, and then, in \S6, we apply this theorem to link the $\cstar$-algebra associated to an $\uc$-central extension of a groupoid $\cG$ to the reduced cross-product $\cA\rtimes_r\cG$, where $\cA$ is some Dixmier-Douady bundle.

\section{Preliminaries}

Although we assume that the reader is familiar with the language of groupoids (see for instance ~\cite{Ren}), we recall some of their basics used substantially throughout this paper. All the groupoids we are working with are supposed to be Hausdorff, locally compact, second countable, and are equipped with Haar systems. They are also assumed to have finite-dimentional base spaces, in the sense of ~\cite{DD}. 

\subsection{}Given two groupoids $\grpd$ and $\gamgpd$, a \emph{generalized morphism} $Z:\Ga\To \cG$ consists of a (locally compact Hausdorff) space $Z$, two maps $\xymatrix{\Gamo & Z \ar[l]_{\fr} \ar[r]^{\fs} & \Gpdo}$ ($\fs$ and $\fr$ are called \emph{generalized source map} and \emph{generalized range map} of $Z$, respectively), a left action of $\Ga$ on $Z$ with respect to $\fr$, a right action of $\cG$ on $Z$ with respect to $\fs$, such that the two actions commute, and $Z\To \Gamo$ is a right $\cG$-principal bundle. Such a morphism is a \emph{Morita equivalence} if in addition, $Z\To \Gpdo$ is a left $\Ga$-principal bundle; in this case, we say that $\Ga$ and $\cG$ are \emph{Morita equivalent}, and we write $\Ga\sim_Z \cG$. The terminology of \emph{generalized morphism} is justified by the fact that any \emph{strict groupoid homomorphism} (see ~\cite{Ren}) $f:\Ga\To \cG$ induces a generalized one $Z_f:\Ga\To \cG$, where $Z_f:=\Gamo\times_{f,\Gpdo,r}\cG$, with generalized source and range $\fs(y,g):=s(g)$ and $\fr(y,g):=y$, while the actions are $\g\cdot(s(\g),g):=(r(\g),f(\g)g)$, and $(y,g)\cdot g':=(y,gg')$.
\subsection{}If $\Ga\sim_Z\cG$, then $\cG\sim_{Z^{-1}}\Ga$, where $Z^{-1}$ is $Z$ as topological space, and if $\flat:Z\To Z^{-1}$ is the identity map, the generalized source and range are $\fs^\flat(\flat(z)):=\fr(z)$ and $\fr^\flat(\flat(z)):=\fs(z)$. The left $\cG$-action on $Z^{-1}$ is given by $g\cdot\flat(z):=\flat(zg^{-1})$ for $(z,g^{-1})\in Z\ast\cG$, and the right $\Ga$-action is $\flat(z)\cdot\g:=\flat(\g^{-1}z)$ whenever $(\g^{-1},z)\in \Ga\ast Z$. If $\Ga\sim_{Z_1}\cG_1\sim_{Z_2}\cG$, then $\Ga\sim_{Z_1\times_{\cG_1}Z_2}\cG$, where $Z_1\times_{\cG_1}Z_2$ is the quotient of the fibre product space $Z_1\times_{\fs_1,\cG_1^{(0)},\fr_2}Z_2$ by the equivalence relation $(z_1,z_2)\sim (z_1g_1,g_1^{-1}z_2)$.

\subsection{}A \emph{Dixmier-Douady bundle} $\cA$ over $\cG$ is a locally trivial bundle $\cA\To \Gpdo$ with fibre the $\cstar$-algebra $\cK$ of compact operators on the separable infinite-dimensional Hilbert space $\cH=l^2(\NN)$, together with an action $\alpha$ by automorphisms of $\cG$ on $\cA$; that is, a continuous family of isomorphisms of $\cstar$-algebras $\alpha_g:\cA_{s(g)}\To \cA_{r(g)}$ such that $\alpha_{gh}=\alpha_g\circ \alpha_h$ whenever $g$ and $h$ are composable, and $\alpha_{g^{-1}}=\alpha_g^{-1}$. Such a bundle is represented by the triple $(\cA,\cG,\al)$. Let $\cH_\cG:=L^2(\cG)\otimes \cH$, where $L^2(\cG)$ is the $\cG$-equivariant $\Co(\Gpdo)$-Hilbert module obtained by completing $\cC_c(\cG)$ with respect to the scalar product $\<\xi,\eta\>(x)=\int_{\cG^x}\overline{\xi(g)}\eta(g)d\mu_\cG^x(g)$. We say that two Dixmier-Douady bundles $\cA$ and $\cB$ are \emph{Morita equivalent}, and write $\cA\sim \cB$, if $\cA\otimes\cK(\cH_\cG)\cong \cB\otimes\cK(\cH_\cG)$. The set of Morita equivalence classes of Dixmier-Douady bundles forms an abelian group $\mathbf{Br}(\cG)$ called \emph{the Brauer group} of $\cG$. We refer to ~\cite{KMRW}, ~\cite{Tu}, or ~\cite{M} for more details about the structures of $\mathbf{Br}(\cG)$. 


\section{Fell systems and their reduced $\cstar$-algebras}

If $p:\cE\To \cG$ is a Banach bundle, we set \[\cE^{[2]}:=\left\{(e_1,e_2)\in \cE\times \cE \mid (p(e_1),p(e_2))\in \cG^{(2)}\right\}.\] Let $\fm: \cG^{(2)}\To \cG$ denote the partial multiplication of the groupoid $\grpd$. Then $\fm^\ast \cE\To \cG^{(2)}$ is a Banach bundle. 

\begin{df}(cf. \cite{Kum}, ~\cite[Appendix A]{TXL})~\label{df:fell_system}.
A \emph{multiplication} on $\cE$ consists of a continuous map $\cE^{[2]}\ni (e_1,e_2)\mto \left((p(e_1),p(e_2)),e_1e_2\right)\in \fm^\ast\cE $ satisfying the following properties:
\begin{itemize}
        \item[(i)] the induced map $\cE_g\times \cE_h\To \cE_{gh}$ is bilinear for all $(g,h)\in \cG^{(2)}$;
        \item[(ii)] (\textbf{associativity}) $(e_1e_2)e_3=e_1(e_2e_3)$ whenever the multiplication makes sense; and
        \item[(iii)] $\|e_1e_2\| \leq \|e_1\|\|e_2\|$, for every $(e_1,e_2)\in \cE^{[2]}$.		
	\end{itemize}
A ${}^\ast$-\emph{involution} on $\cE$ is a continuous $2$-periodic map ${}^\ast: \cE\ni e\mto e^\ast\in\cE$ such that 
\begin{itemize}
		\item[(iv)] $p(e^\ast)=p(e)^{-1}$, and 
		\item[(v)] for all $g\in \cG$, the induced map ${}^\ast:\cE_g\To \cE_{g^{-1}}$ is conjugate linear.	
		\end{itemize}		
Finally, we say that $p:\cE\To \cG$ is a Fell bundle if in addition the following conditions hold:
\begin{itemize}
      \item[(vi)] $(e_1e_2)^\ast=e_2^\ast e_1^\ast, \forall (e_1,e_2)\in \cE^{[2]}$;
      \item[(vii)] $\|e^\ast e\|=\|e\|^2, \forall e\in \cE$; in particular, $\cE_x$ is a $\cstar$-algebra, for $x\in \Gpdo$;
      \item[(viii)] $e^\ast e\ge 0, \forall e\in \cE$; and
      \item[(ix)] (\textbf{fullness}) the image of the map $\cE_g\times \cE_h\To \cE_{gh}$ spans a dense subspace of $\cE_{gh}$, for all $(g,h)\in \cG^{(2)}$.			
		\end{itemize}
If we are given such a Fell bundle, we say that $(\cG,\cE)$ is a \emph{Fell system}. 				
\end{df}

\begin{ex}
If $\cA$ is a Dixmier-Douady bundle over $\cG$ with action $\al$, we get a Fell system $(\cG,s^\ast\cA)$, where $s^\ast\cA$ is the $\cstar$-bundle over $\cG$ obtained by pulling back $\cA$ through the source map $s:\cG\To \Gpdo$. The multiplication on $s^\ast\cA$ is given by $\cA_{s(g)}\times\cA_{s(h)}\ni (a,b)\mto \al_{h^{-1}}(a)b\in \cA_{s(gh)}=\cA_{s(h)}$, and the ${}^\ast$-involution is $\cA_{s(g)}\ni a\mto \al_g(a)^\ast \in \cA_{s(g^{-1})}=\cA_{r(g)}$.	
\end{ex}

Given a Fell system $(\cG,\cE)$, we turn the space $\cC_c(\cG;\cE)$ of compactly supported continuous sections of $\cE$ into a convolution algebra by setting 
\begin{eqnarray}
	(\xi\ast \eta)(g):=\int_{\cG^{r(g)}}\xi(\g)\eta(\g^{-1}g)d\mu_\cG^{r(g)}(\g), \ \text{and} \ \xi^\ast(g):=\xi(g^{-1})^\ast, \ \text{for} \ \xi,\eta\in \cC_c(\cG;\cE), \ g\in \cG.
\end{eqnarray}

Let $\|\xi\|_1:=\text{sup}_{x\in \Gpdo}\int_{\cG^x}\|\xi(g)\|d\mu_\cG^x(g)$. We next define the \emph{I-norm} $\|\cdot\|_I$ by $\|\xi\|_I:=\text{max}\{\|\xi\|_1,\|\xi^\ast\|_1\}$. Then, the completion $L^1(\cG;\cE)$ of $\cC_c(\cG;\cE)$ with respect to $\|\cdot\|_I$ is a Banach ${}^\ast$-algebra. Its envelopping $\cstar$-algebra $\cstar(\cG;\cE)$ is called the \emph{full $\cstar$-algebra of} $(\cG,\cE)$.\\

Note that we have a $\cstar$-bundle over the base $\Gpdo$, defined as the pull-back of $\cE$ along the identity map $\Gpdo\hookrightarrow \cG$; we denote it by $\cE^{(0)}$. We can view $\cE^{(0)}$ as the restriction $\cE_{|\Gpdo}$, once we have identified $\Gpdo$ with a subset of $\cG$. Moreover, equipped with the pointwise norm, $A:=\Co(\Gpdo;\cE^{(0)})$ is a $\cstar$-algebra. We will usually write $A_x$ for the $\cstar$-algebra which is the fibre of $\cE^{(0)}$ over $x\in \Gpdo$. \\
The following proposition is proved, for instance, in ~\cite{M}, (and in ~\cite{Kum} in the case of proper groupoids), so we omit the proof.

\begin{pro}
$\cC_c(\cG;\cE)$ is a pre-Hilbert (left) $A$-module under the operations
\begin{eqnarray}
		(f\cdot\xi)(g):=f(r(g))\xi(g), \ \text{for} \ f\in A, \xi\in \cC_c(\cG;\cE), \ a\in \cG, \ \text{and}			
	\end{eqnarray}	
\begin{eqnarray}
{}_A\<\xi,\eta\>(x):=\int_{\cG^x}\xi(g)\eta(g)^\ast d\mu_\cG^x(g), \ \text{for} \ \xi,\eta\in \cC_c(\cG;\cE), \ x\in \Gpdo.		
\end{eqnarray}
\end{pro}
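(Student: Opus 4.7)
My plan is to verify the four defining properties of a pre-Hilbert left $A$-module in turn: (a) well-definedness of the action and the $A$-valued bracket; (b) the left-module and sesquilinearity axioms; (c) the identities ${}_A\<f\cdot\xi,\eta\> = f\cdot {}_A\<\xi,\eta\>$ and ${}_A\<\xi,\eta\>^\ast = {}_A\<\eta,\xi\>$; and (d) positivity ${}_A\<\xi,\xi\>\ge 0$ in $A$.

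For (a), the pair $(r(g),g)$ is composable, so the bundle multiplication in Definition~\ref{df:fell_system}(i) produces $f(r(g))\xi(g)\in \cE_g$, continuous and compactly supported in $g$ by continuity of the multiplication on $\cE$. For the bracket, the integrand $\xi(g)\eta(g)^\ast$ lies in $\cE_g\cdot\cE_{g^{-1}}\subset \cE_{r(g)}=A_{r(g)}$; for fixed $x$ it is a compactly supported continuous $A_x$-valued function on $\cG^x$, so its Bochner integral against $\mu_\cG^x$ is a well-defined element of $A_x$. Continuity and $\Co$-behaviour of $x\mapsto {}_A\<\xi,\eta\>(x)$ are standard consequences of the left continuity of the Haar system and the compact support of $\xi,\eta$.

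Step (b) follows routinely from bilinearity (i), associativity (ii), and conjugate linearity of the involution (v). For (c), the pointwise identity $f(r(g))=f(x)$ on $\cG^x$ allows one to pull $f(x)$ outside the integral, giving $A$-linearity in the first slot; the symmetry ${}_A\<\xi,\eta\>^\ast={}_A\<\eta,\xi\>$ follows directly from axiom (vi) applied pointwise inside the integral, with no change of variables required.

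The only substantive point is (d). Fix $x\in\Gpdo$ and $g\in\cG^x$, and set $f := \xi(g)^\ast\in\cE_{g^{-1}}$. By $2$-periodicity of the involution, $\xi(g)\xi(g)^\ast = f^\ast f\in \cE_{r(g)}=A_x$, which is positive in the $\cstar$-algebra $A_x$ by axiom (viii). Hence the integrand $g\mapsto \xi(g)\xi(g)^\ast$ is a compactly supported continuous function from $\cG^x$ into the closed positive cone of $A_x$, so its integral against $\mu_\cG^x$ remains in that cone, giving ${}_A\<\xi,\xi\>(x)\ge 0$ in $A_x$. Since fibrewise positivity is equivalent to positivity in the $\cstar$-algebra $A=\Co(\Gpdo;\cE^{(0)})$, we obtain ${}_A\<\xi,\xi\>\ge 0$, completing the verification. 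I do not anticipate any genuine obstacle here, as axioms (vi)--(viii) of Definition~\ref{df:fell_system} are tailored precisely to make this argument work; the only mildly delicate step is the Haar-system continuity of $x\mapsto {}_A\<\xi,\eta\>(x)$.
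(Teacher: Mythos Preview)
Your verification is correct and complete for the purposes of this proposition: the four checks (well-definedness, module/sesquilinearity axioms, the two compatibility identities, and positivity via axiom~(viii) applied to $f^\ast f$ with $f=\xi(g)^\ast$) are exactly what is needed, and your handling of each step is sound.

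The paper, however, does not actually prove this proposition: it simply remarks that the result is proved elsewhere (in~\cite{M}, and in~\cite{Kum} for proper groupoids) and omits the argument. So there is no ``paper's own proof'' to compare against beyond that citation. Your direct fibrewise verification is the standard one and presumably matches what those references contain; the only comment is that you supply the details the paper chose to suppress.
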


Let $L^2(\cG;\cE)$ be the Hilbert $A$-module obtained by completing $\cC_c(\cG;\cE)$ with respect to the norm 
\[\|\xi\|_2:=\|{}_A\<\xi,\xi\>\|^{1/2}, \ \text{for} \ \xi\in \cC_c(\cG;\cE). \]
Then, left multiplication by an element of $\cC_c(\cG;\cE)$ (i.e. the map $\pi_l(\xi):\eta\mto \xi\ast\eta, \ \xi,\eta\in \cC_c(\cG;\cE)$) is a bounded $A$-linear operator with respect to the norm $\|\cdot\|_2$ which is adjointable (see ~\cite{M}). Hence $\pi_l$ extends to a ${}^\ast$-monorphism 
\[\pi_l:\cC_c(\cG;\cE)\To \cL(L^2(\cG;\cE)):=\cL_A(L^2(\cG;\cE)).\]
The extension of $\pi_l$ to $L^1(\cG;\cE)$ is known as \emph{the left regular representation} of $L^1(\cG;\cE)$.

\begin{df}(cf. ~\cite[A.3]{TXL}).
Under the above notations, the closure of the image $\pi_l(\cC_c(\cG;\cE))$ in $\cL(L^2(\cG;\cE))$ with respect to the operator norm is called the \emph{reduced $\cstar$-algebra} of the Fell system $(\cG,\cE)$, and is denoted by $\cstar_r(\cG;\cE)$; i.e.
\[\cstar_r(\cG;\cE):=\overline{\pi_l(\cC_c(\cG;\cE))}\subset \cL(L^2(\cG;\cE))).\]	
\end{df}

\begin{rem}
One can think of $\cstar_r(\cG;\cE)$ as the completion of the convolution ${}^\ast$-algebra $\cC_c(\cG;\cE)$ with respect to the \emph{reduced norm} $\|\cdot\|_r$ given by $\|\xi\|_r:=\text{sup}\{\|\pi_l(\xi)\eta\|_2 \mid \eta\in \cC_c(\cG;\cE), \|\eta\|_2\leq 1 \}$.	
\end{rem}

\begin{rem}
If $(\cA,\cG,\al)$ is a Dixmier-Douady bundle, then the reduced $\cstar$-algebra $\cstar_r(\cG;s^\ast\cA)$ associated to the Fell bundle $s^\ast\cA$, denoted by $\cA\rtimes_r\cG$, is called the \texttt{reduced crossed product} of $(\cA,\cG,\al)$; it plays an important role in twisted $K$-theory of groupoids (~\cite{TXL}, ~\cite{Tu}).	
\end{rem}

\medskip

Alternatively, we will sometimes use another definition of the reduced norm, which is a generalisation of that of ~\cite{SW}. Suppose we are given a right Fell system $(\cG,\cE)$. Then, for all $x\in \Gpdo$, consider the inclusion $i_x:\cG_x\To \cG$. Then, as in ~\cite[A.3]{TXL}, we define the (right) Hilbert $A_x$-module $L^2(\cG_x;\cE)$ as the completion of $\cC_c(\cG_x;i_x^\ast \cE)$ with respect to the inner product $\<\xi,\eta\>_{A_x}:=\int_{\cG_x}\xi(g)^\ast\eta(g)d\mu_x(g)$ (the right action being $(\xi\cdot a):\cG_x\ni g\mto \xi(g)\cdot a\in \cE_g$). The following lemma is very easy to prove.

\medskip

\begin{lem}~\label{lem:pi_x}
Let $(\cG,\cE)$ be as above. Then, for all $x\in \Gpdo$, left multiplication by elements of $\cC_c(\cG;\cE)$ gives a ${}^\ast$-representation $\pi_x^{\cG}:\cC_c(\cG;\cE)\To \cL_{A_x}(L^2(\cG_x;\cE))$. Moreover, we have 
\[\|\xi\|_{\cstar(\cG;\cE)}:=\|\xi\|_r=\underset{x\in \Gpdo}{\text{sup}} \{\|\pi_x^{\cG}(\xi)\|, \ \forall \xi\in \cC_c(\cG;\cE)\] 	
 \end{lem}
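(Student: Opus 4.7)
The statement is the Fell-bundle analog of the standard reduced-norm formula for groupoid $\cstar$-algebras (cf.\ \cite[Theorem 13]{SW}), so my plan is to mimic the groupoid proof, with the Fell-bundle $*$-involution and fiberwise $\cstar$-identity replacing complex conjugation and the scalar modulus. First I would verify that $\pi_x^\cG$ is well-defined. For $\xi \in \cC_c(\cG; \cE)$ and $\eta \in \cC_c(\cG_x; i_x^*\cE)$, the convolution
\[ (\xi * \eta)(g) := \int_{\cG^{r(g)}} \xi(\gamma)\eta(\gamma^{-1} g) \, d\mu_\cG^{r(g)}(\gamma), \qquad g \in \cG_x, \]
makes sense because $s(\gamma^{-1}g) = s(g) = x$, and a standard Cauchy--Schwarz estimate against the Haar system yields $\|\pi_x^\cG(\xi)\eta\|_2 \leq \|\xi\|_I \|\eta\|_2$. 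Adjointability with $\pi_x^\cG(\xi)^* = \pi_x^\cG(\xi^*)$ then follows from Fubini's theorem combined with the Fell-bundle axioms (iv)--(vi), and associativity of the convolution makes $\pi_x^\cG$ a $*$-homomorphism into $\cL_{A_x}(L^2(\cG_x; \cE))$.

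For the norm identity I would disintegrate $L^2(\cG; \cE)$ as a continuous field of Hilbert $A_x$-modules over $\Gpdo$ whose fiber at $x$ is identified with $L^2(\cG_x; \cE)$. Concretely, the inversion-and-involution map $\eta \mapsto \check\eta$, $\check\eta(g) := \eta(g^{-1})^*$, bridges the left-module/range-fiber picture of $L^2(\cG; \cE)$ from \S2 with the right-module/source-fiber picture used in the lemma, after which the fibered structure becomes transparent. The left regular representation $\pi_l(\xi)$ is $\Co(\Gpdo)$-linear and therefore decomposes as a field $\{T_x\}_{x \in \Gpdo}$ of operators on the fibers; a direct calculation on $\cC_c(\cG; \cE)$ then shows $T_x = \pi_x^\cG(\xi)$ under the identification.

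The conclusion $\|\xi\|_r = \sup_x \|\pi_x^\cG(\xi)\|$ follows from the standard fact that the operator norm of a $\Co(\Gpdo)$-linear endomorphism of a continuous bundle of Hilbert $A_x$-modules equals the supremum of its fiberwise norms. The main obstacle will be the fibered identification of $L^2(\cG; \cE)$, since it requires reconciling the left/range conventions of the paper with the right/source conventions of the lemma and checking that the Haar-system-induced inner product disintegrates consistently over $\Gpdo$; once that is in place, the field decomposition of $\pi_l$ and the norm formula are essentially automatic.
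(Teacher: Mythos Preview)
The paper does not actually prove this lemma: it simply asserts that ``the following lemma is very easy to prove'' and moves on. Your outline is a correct and standard way to supply the omitted details---the boundedness via the $I$-norm estimate, adjointability from the Fell-bundle involution axioms, and the norm formula via the identification of $L^2(\cG;\cE)$ with a $\Co(\Gpdo)$-field of the fiber modules $L^2(\cG_x;\cE)$ (after the inversion/involution flip you describe to pass from the paper's left-$A$-module, range-fiber conventions to the right-$A_x$-module, source-fiber conventions of the lemma). This is exactly the argument implicit in the references the paper cites (\cite{SW}, \cite[A.3]{TXL}), so there is nothing to contrast.
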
 

\section{Equivalence of Fell systems}

In this section, we are presenting the notion of equivalences of Fell bundles over Morita equivalent groupoids. Our definitions are slight modifications of those given by P. Muhly and D. Williams in  ~\cite[\S.6]{MW2}. \
  
Suppose that $Z$ is a (right) principal $\cG$-space; that is, there is a principal $\cG$-action $\al:Z\ast\cG\To Z$. If $\pi:\cX\To Z$ is a Banach bundle, and if $p:\cE\To \cG$ is a Fell bundle, we set \[\cX\ast\cE:=\{(u,e)\in \cX\times \cE \mid (\pi(u),p(e))\in Z\ast\cG\}.\]
\begin{df}
A \emph{right Fell $\cG$-pair} over the principal $\cG$-space $Z$ is a pair $(\cX,\cE)$ consisting of a Fell bundle $\cE$ over $\cG$, a Banach bundle $\pi:\cX\To Z$, and a continuous map $\cX\ast\cE\ni (u,e)\mto ue\in \al^\ast\cX$, such that
\begin{itemize}
  		\item[(i)](\textbf{bilinearity}) for all $(z,g)\in Z\ast\cG$, the induced map $\cX_z\times \cE_g\To \cX_{zg}$ is bilinear, and is compatible with the scalar multiplication; i.e. $(\lambda u)e=u(\lambda e)=\lambda(ue), \forall \lambda\in \CC, (u,e)\in \cX_z\times \cE_g$;
  		\item[(ii)](\textbf{associativity}) if $(z,g)\in Z\ast\cG$ and $(g,h)\in \cG^{(2)}$, one has $u(e_1e_2)=(ue_1)e_2, \forall (u,e_1,e_2)\in \cX_z\times\cE_g\times\cE_h$;
  		\item[(iii)] $\|ue\|=\|u\|\|e\|, \forall (u,e)\in \cX_z\times \cE_g$;
  		\item[(iv)](\textbf{faithfullness}) the induced map $\cX_z\times \cX_g\To \cX_{zg}$ spans a dense subspace of $\cX_{zg}$.  
  	\end{itemize}  
 We also say that $(\cG,\cE)$ acts on $\cX$ on the right over $Z$. \\
 Likewise, one defines a \emph{left Fell $\cG$-pair} $(\cE,\cX)$ over a principal left $\cG$-space $Z$.	
\end{df}

\begin{rmk}
 Notice that if $(\cX,\cE)$ is a right Fell $\cG$-pair over $Z$, then for every $z\in Z$, $\cX_z$ is a right $\cE_{\fs(z)}$-module.
\end{rmk}

Now suppose that $\Ga\sim_Z\cG$. Then there are a continuous $\Ga$-valued inner product ${}_\Ga<\cdot,\cdot>:Z\times_{\Gpdo}Z^{-1}\To \Ga$, and a continuous $\cG$-valued inner product $<\cdot,\cdot>_{\cG}:Z^{-1}\times_{\Gamo}Z\To \cG$, defined as follows
\begin{itemize}
	\item for $(z,\flat(z'))\in Z\times_{\Gamo}Z^{-1}$, ${}_\Ga<z,z'>$ is the unique element of $\Ga$ such that $z={}_\Ga<z,z'>\cdot z'$;
	\item for $(\flat(z),z')\in Z^{-1}\times_{\Gamo}Z$, $<z,z'>_\cG$ is the unique element of $\cG$ such that $z'=z\cdot<z,z'>_\cG$.
\end{itemize}
Observe that these functions are well defined, for $Z\To \Gamo$ is a $\cG$-principal bundle, and $Z\To \Gpdo$ is a $\Ga$-principal bundle. Furthermore, they satisfy the following equalities (cf. ~\cite[\S.6.1]{Parav}):
\begin{align}
	{}_\Ga<z,z'>^{-1} &={}_\Ga<z',z>, \forall (z,\flat(z'))\in Z\times_{\Gpdo}Z^{-1},\\
	<z,z'>_\cG^{-1} &=<z',z>_\cG, \forall (\flat(z),z')\in Z^{-1}\times_{\Gamo}Z, \ \text{and} \\
	z\cdot<z',z">_\cG &={}_\Ga<z,z'>\cdot z", \ \forall (z,\flat(z'),z")\times Z\times_{\Gpdo}Z^{-1}\times_{\Gamo}Z.
\end{align}

\medskip

\begin{lem}~\label{lem:right_vs_left-action}
Let $\Ga\sim_Z\cG$. Then, any right Fell $\cG$-pair $(\cX,\cE)$ over $Z$ gives rise to the left Fell $\cG$-pair $(\cE,\overline{\cX})$ over the inverse $Z^{-1}$, where $\overline{\cX}$ is defined as the conjugate bundle of $\cX$. A similar statement holds for a left $\Ga$-pair over $Z$.	
\end{lem}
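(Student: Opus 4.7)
The idea is to convert the right action of $\cE$ on $\cX$ into a left action on the conjugate bundle by exploiting the $^\ast$-involution on $\cE$ and the identification $\flat:Z\To Z^{-1}$. Concretely, define $\overline{\cX}$ to be the bundle whose total space is that of $\cX$, equipped with the conjugate scalar multiplication $\lambda\cdot\bar u:=\overline{\bar\lambda u}$, and with bundle projection $\bar\pi:\overline{\cX}\To Z^{-1}$ given by $\bar\pi(\bar u):=\flat(\pi(u))$. The Banach bundle structure is transported in the obvious way, and $\|\bar u\|=\|u\|$.

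Next I would define the proposed left action by
\[
e\cdot \bar u := \overline{u\cdot e^\ast}, \qquad (e,\bar u)\in \cE\ast \overline{\cX}.
\]
The compatibility of indices is straightforward: if $p(e)=g$ and $\pi(u)=z$, then the right action of $\cG$ on $Z$ forces $\fs(z)=s(g)$, which is exactly the condition $s(p(e))=\fr^\flat(\bar\pi(\bar u))$ for the left action of $\cG$ on $Z^{-1}$. Moreover $u\cdot e^\ast \in \cX_{z\cdot g^{-1}}$, hence $\overline{u\cdot e^\ast}\in \overline{\cX}_{\flat(zg^{-1})} = \overline{\cX}_{g\cdot \flat(z)}$, as required.

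The verification of the axioms is then a routine unwinding of definitions, whose main subtlety lies in correctly tracking conjugations. For bilinearity, one checks $e\cdot(\lambda \bar u)=\overline{\bar\lambda u\cdot e^\ast}=\lambda\overline{u\cdot e^\ast}=\lambda(e\cdot\bar u)$, and similarly $(\lambda e)\cdot \bar u=\overline{u\cdot\bar\lambda e^\ast}=\lambda(e\cdot\bar u)$. Associativity follows from $(e_1e_2)^\ast=e_2^\ast e_1^\ast$:
\[
e_1\cdot(e_2\cdot\bar u)=\overline{(u\cdot e_2^\ast)\cdot e_1^\ast}=\overline{u\cdot(e_2^\ast e_1^\ast)}=\overline{u\cdot (e_1e_2)^\ast}=(e_1e_2)\cdot\bar u.
\]
The norm identity $\|e\cdot\bar u\|=\|u\cdot e^\ast\|=\|u\|\|e^\ast\|=\|\bar u\|\|e\|$ is immediate, and faithfullness follows because the map $\cX_z\times \cE_{g^{-1}}\To \cX_{zg^{-1}}$ has dense image in $\cX_{zg^{-1}}$ by assumption, and the conjugation $\cX\To \overline{\cX}$ is an antilinear isometric homeomorphism sending $\cX_{zg^{-1}}$ onto $\overline{\cX}_{g\cdot \flat(z)}$. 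Continuity of the pairing $(e,\bar u)\mto e\cdot\bar u$ is transferred from the continuity of $(u,e)\mto u\cdot e$ via the continuity of $e\mto e^\ast$ and of the conjugation. The analogous statement for a left $\Ga$-pair $(\cF,\cY)$ over $Z$ follows by the symmetric construction $\bar y\cdot \g := \overline{\g^\ast\cdot y}$.

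I expect the only real difficulty to be bookkeeping: making sure every conjugation, inversion on the groupoid, and involution on $\cE$ appears in the right place so that all identities agree with the left actions $g\cdot \flat(z)=\flat(zg^{-1})$ and $\g\cdot\flat(z)=\flat(\g^{-1}\cdot z)$ introduced in \S1. Once the correct definition $e\cdot\bar u=\overline{u\cdot e^\ast}$ is in place, no genuine new ingredient is needed.
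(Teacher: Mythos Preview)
Your proposal is correct and follows exactly the same approach as the paper: both define the left $\cE$-action on $\overline{\cX}$ by $e\cdot\flat(u):=\flat(u\cdot e^\ast)$ over the projection $\bar\pi(\flat(u))=\flat(\pi(u))$. The paper's proof merely records these definitions and stops, whereas you go on to verify bilinearity, associativity via $(e_1e_2)^\ast=e_2^\ast e_1^\ast$, the norm identity, faithfullness, and continuity --- all routine, and all correct.
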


\begin{proof}
By definition $\overline{\cX}$ is $\cX$ as space. If $\flat:\cX\To \overline{\cX}$ denotes the identity map, we define the projection $\bar{\pi}:\overline{\cX}\To Z^{-1}$ by $\bar{\pi}(\flat(u)):=\flat(\pi(u))$. The fibre $\overline{\cX}_{\flat(z)}$ is the conjugate Banach space of $\cX_z$; the left $\cG$-action on $\overline{\cX}$ is $g\cdot\flat(u):=\flat(u\cdot g^{-1})$, while the left action of $\cE$ on $\overline{\cX}$ is given by $\cE_g\times\overline{\cX}_{\flat(z)}\ni (e,\flat(u))\mto \flat(u\cdot e^\ast)\in \overline{\cX}_{g\cdot \flat(u)}$.	
\end{proof}

Let us fix some notations that will be used in the sequel. Suppose $\Ga\sim_Z\cG$. If $(\cX,\cE)$ is a Fell $\cG$-pair, we define the topological spaces $\cX\ast\overline{\cX}:=\{(u,\flat(u'))\in \cX\times\overline{\cX}\mid (\pi(u),\bar{\pi}(\flat(u')))\in Z\times_{\Gpdo}Z^{-1}\}$ and $\overline{\cX}\ast\cX:=\{(\flat(u),u')\in \overline{\cX}\times \cX\mid (\bar{\pi}(\flat(u)),\pi(u'))\in Z^{-1}\times_{\Gamo}Z\}$. Observe that the space $Z\times_{\Gpdo}Z^{-1}$ is a locally compact groupoid with base $Z$ as follows: the product is $(z,\flat(z'))\cdot(z',\flat(z")):=(z,\flat(z"))$, the source of $(z,\flat(z'))$ is $z'$, its range is $z$, and its inverse is $(z',\flat(z))$. Similarly $Z^{-1}\times_{\Gamo}Z$ is a locally compact groupoid with base $Z^{-1}$.\\
 If $\Ga\sim_Z\cG$, and if $(\cG,\cE)$ and $(\Ga,\cF)$ are Fell systems, we denote by $\cE_{>_\cG}$ and $\cF_{{}_\Ga<}$ the Fell bundles over $Z^{-1}\times_{\Gamo}Z$ and $Z\times_{\Gpdo}Z^{-1}$, respectively, obtained by pulling back $\cE\To \cG$ along the continuous map $<\cdot,\cdot>_\cG$, and $\cF\To \Ga$ along the continuous map ${}_\Ga<\cdot,\cdot>$, respectively. Note that, for instance, the fiber of $\cE_{>_\cG}$ over $(\flat(z),z')$ is isomorphic to $\cE_{<z,z'>_\cG}$. 

\begin{df}
Assume $\Ga\sim_Z\cG$ and $(\cX,\cE)$ is a Fell $\cG$-pair over $Z$. An $\cE$-\emph{valued inner product} on $\cX$ is a continuous map $\<\cdot,\cdot\>_\cE:\overline{\cX}\ast\cX\To \cE_{>_\cG}, (\flat(u),u')\mto \<u,u'\>_\cE$, such that
\begin{itemize}
	\item[(i)] for $(\flat(z),z')\in Z^{-1}\times_{\Gamo}Z$, the induced map $\<\cdot,\cdot\>_\cE:\overline{\cX}_{\flat(z)}\times \cX_{z'}\To \cE_{<z,z'>_\cG}$ is linear in both the first and the second variable;
	\item[(ii)](\textbf{$\cE$-linearity}) if $(\flat(z),z')\in Z^{-1}\times_{\Gamo}Z$ and $(z,g)\in Z\ast\cG$, then $\<u,u'\>_\cE\cdot e=\<u,u'\cdot e\>_\cE, \forall (\flat(u),u',e)\in \overline{\cX}_{\flat(z)}\times\cX_{z'}\times \cE_g$;
	\item[(iii)] $\<u,u'\>_\cE^\ast=\<u',u\>_\cE\in \cE_{<z,z'>_\cG^{-1}}=\cE_{<z',z>_\cG}$;
	\item[(iv)](\textbf{positivity}) for all $z\in Z$ and $u\in \cX_z$, $\<u,u\>_\cE\ge 0$ in $\cE_{<z,z>_\cG}=\cE_{\fs(z)}$; and the equality $\<u,u\>_\cE=0$ implies $u=0$.  
\end{itemize}
In this case, we say that $\cX$ is a \emph{right $(\cG,\cE)$-inner product module} over $Z$.\\
Likewise, if $(\cF,\cX)$ is a left Fell $\Ga$-pair, one defines an $\cF$-valued inner product ${}_\cF\<\cdot,\cdot\>:\cX\ast\overline{\cX}\To \cF_{{}_\Ga<}$, all the actions being considered on the left. 
\end{df}

\begin{rmk}
Observe that conditions (ii) and (iii) of the definition imply that $\<u\cdot e,u'\>_\cE=e^\ast\cdot\<u,u'\>_\cE$, whenever the multiplications and the inner product are defined. Moreover, for all $z\in Z$, $\cX_z$ is a pre-Hilbert $A_{\fs(z)}$-module. 
\end{rmk}

\medskip 

\begin{df}~\label{df:equiv-Fell}
An \emph{equivalence} between two Fell systems $(\Ga,\cF)$ and $(\cG,\cE)$ is a pair $(Z,\cX)$ such that $\Ga\sim_Z\cG$, $\cX$ is a left $(\cF,\Ga)$-inner product module and a right $(\cG,\cE)$-inner product module over $Z$, with the following properties
\begin{itemize}
  		\item[(i)] (\textbf{equivariance}) for all $(\g,z,g)\in \Ga\ast Z\ast\cG$, the multiplication $\cF_\g\times\cX_z\times\cE_g\To \cX_{\g zg}$ is associative; i.e. $f\cdot(u\cdot e)=(f\cdot u)\cdot e, \forall (f,u,e)\in \cF_\g\times\cX_z\times \cE_g$;
  		\item[(ii)] (\textbf{compatibility}) for all $(z,\flat(z'),z")\in Z\times_{\Gpdo}Z^{-1}\times_{\Gamo}Z$ and $(u,\flat(u'),u")\in \cX_z\times\overline{\cX}_{\flat(z')}\times \cX_{z"}$, ${}_\cF\<u,u'\>\cdot u"=u\cdot \<u',u"\>_\cE$ in $\cX_{z\cdot <z',z">_\cG}=\cX_{{}_\Ga<z,z'>\cdot z"}$;
  		\item[(iii)] the $\cF$-valued inner product is full; i.e., the image of the induced map $\cX_z\times \overline{\cX}_{\flat(z')}\To \cF_{{}_\Ga<z,z'>}$ spans a dense subspace of $\cF_{{}_\Ga<z,z'>}$;
  		\item[(iv)] the $\cE$-valued inner product is full.
  	\end{itemize}
In this case, we write $(\Ga,\cF)\sim_{(Z,\cX)}(\cG,\cE)$.  	
\end{df}

\medskip

\begin{rem}~\label{rem:equiv-symmetry}
It follows from Definition~\ref{df:equiv-Fell} and Lemma~\ref{lem:right_vs_left-action} that if $(\Ga,\cF)\sim_{(Z,\cX)}(\cG,\cE)$, then $(\cG,\cE)\sim_{(Z^{-1},\overline{\cX})}(\Ga,\cF)$. Furthermore, it is starightforward that for all $z\in Z$, (the completion with respect to the inner products of) $\cX_z$ is an imprimitivity $B_{\fr(z)}$-$A_{\fs(z)}$-bimodule.
\end{rem}

\medskip 

\begin{ex}~\label{ex:reflexivity}
If $(\cG,\cE)$ is a Fell system, then $(\cG,\cE)\sim_{(Z_\cG,\cE)}(\cG,\cE)$, where $Z_\cG$ is the space of morphisms $\cG^{(1)}$ (with which we identify $\cG$). Indeed, $Z_\cG$ implements a Morita equivalence $\cG\sim_{Z_\cG} \cG$, the generalized source and range maps being the source and range maps $s$ and $r$ of $\grpd$, together with the canonical left and right actions given by partial multiplications. Notice that $Z_\cG^{-1}=\{g^{-1}\mid g\in \cG\}$. It is easy to see that the inner products ${}_\cG<\cdot,\cdot>:Z_\cG\times_{\Gpdo} Z_\cG^{-1}\To \cG$ and $<\cdot,\cdot>_\cG:Z_\cG^{-1}\times_{\Gpdo}Z_\cG\To \cG$ are ${}_\cG<g,h>=gh^{-1}$ and $<g,h>_\cG=g^{-1}h$, respectively. $\cE$ acts on itself over $Z_\cG$ by definition of a Fell bundle. Now, the conjugate bundle $\overline{\cE}\To Z_\cG^{-1}$ is given fibrewise by $\overline{\cE}_{g^{-1}}=\{e^\ast \mid e\in \cE_g\}$. The inner products are $\cE_g\times \overline{\cE}_{h^{-1}}\ni (e_1,e_2^\ast)\mto e_1e_2^\ast\in \cE_{gh^{-1}}$, and $\overline{\cE}_{g^{-1}}\times \cE_h\ni (e_1^\ast,e_2)\mto e_1^\ast e_2\in \cE_{g^{-1}h}$. It is straightforward that all the conditions of Definition~\ref{df:equiv-Fell} are satisfied.	
\end{ex}

By virtue of Remark~\ref{rem:equiv-symmetry} and Example~\ref{ex:reflexivity}, equivalence of Fell systems is symmetric and reflexive. Also, it is not hard to show that it is transitive, so that it defines an equivalence relation among the collection of Fell systems (cf. ~\cite{M}).\\
In the sequel, we will need the following result.

\medskip

\begin{pro}~\label{pro:full-Ga-G-bimodule}
If $(\Ga,\cF)\sim_{(Z,\cX)}(\cG,\cE)$, $\cC_c(Z;\cX)$ is a full pre-inner product $\cC_c(\Ga;\cF)$-$\cC_c(\cG;\cE)$-bimodule with respect to the inductive limit topologies~\footnote{Since we are dealing with Banach ${}^\ast$-algebras, the only properties we take into account here are the continuity of the actions and the pre-inner products with respect to the inductive limit topologies, the compatibility between the actions and the pre-inner products, and the fullness of the latters.} under the following operations:
\begin{align}
 	(\xi\cdot\phi)(z) & :=\int_{\Ga^{\fr(z)}}\xi(\g)\phi(\g^{-1}\cdot z)d\mu_{\Ga}^{\fr(z)}(\g),  \\
 	(\phi\cdot\eta)(z) & := \int_{\cG^{\fs(z)}}\phi(z\cdot g)\eta(g^{-1})d\mu_\cG^{\fs(z)}(g) ,
 	\end{align} 

\begin{align}
	{}_{\cC_c(\Ga;\cF)}\langle \phi,\psi\rangle(\g) & := \int_{\cG^{\fs(z)}}{}_\cF \left\langle \phi(z\cdot g),\psi(\g^{-1}\cdot z\cdot g)\right\rangle d\mu_\cG^{\fs(z)}(g), \ \text{where} \ \fr(z)=r(\g), \ \text{and}
\end{align}
 
\begin{align}	
	\langle \phi,\psi\rangle_{\cC_c(\cG;\cE)}(g) & := \int_{\Ga^{\fr(z)}}\left\langle \phi(\g^{-1}\cdot z),\psi(\g^{-1}\cdot z\cdot g)\right\rangle_{\cE}d\mu_\Ga^{\fr(z)}(\g), \ \text{where} \ \fs(z)=r(g),	
\end{align}	
\end{pro}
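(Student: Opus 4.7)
The plan is to carry out three verifications in turn, per the footnote: (a) each of the four formulas gives a compactly supported continuous section, and the resulting assignments are continuous with respect to the inductive-limit topologies; (b) the algebraic compatibilities --- associativity of the two actions, sesquilinearity, adjoint symmetry, $\cF$- and $\cE$-linearity of the inner products, and the bimodule compatibility ${}_{\cC_c(\Ga;\cF)}\langle\phi,\psi\rangle\cdot\chi=\phi\cdot\langle\psi,\chi\rangle_{\cC_c(\cG;\cE)}$ --- all hold; and (c) both inner products are full. Each item reduces to the pointwise statements built into Definition~\ref{df:equiv-Fell} and the Fell-bundle axioms, combined with Fubini and the invariance of the Haar systems on $\Ga$ and $\cG$.

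For (a), each integrand is compactly supported in the integration variable because $\xi,\eta,\phi,\psi,\chi$ are, and because the orbit maps $\g\mapsto\g^{-1}\cdot z$ and $g\mapsto z\cdot g$ are proper on the fibres of the two principal actions. Joint continuity in $z$ (respectively in $\g$ and $g$) then follows from a standard groupoid-convolution argument relying on continuity of the Haar systems, joint continuity of the bundle actions $\cF\ast\cX\to\cX$ and $\cX\ast\cE\to\cX$, and joint continuity of the two Fell-valued inner products. Uniform bounds on the supports of the outputs when the inputs range over a fixed compact set --- needed for inductive-limit continuity --- come from properness of the generalized source and range maps $\fs,\fr$ restricted to compact subsets of $Z$.

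For (b), the most delicate identity is the bimodule compatibility. Expanding both ${}_{\cC_c(\Ga;\cF)}\langle\phi,\psi\rangle\cdot\chi$ and $\phi\cdot\langle\psi,\chi\rangle_{\cC_c(\cG;\cE)}$ at a fixed point $z_0\in Z$, one Fubini exchange turns each into a double integral over $\Ga\times\cG$ whose integrands differ only by the fibrewise identity ${}_\cF\langle u,u'\rangle\cdot u''=u\cdot\langle u',u''\rangle_\cE$, which is precisely Definition~\ref{df:equiv-Fell}(ii). The change of variables matching the two expressions uses the relation $z\cdot\langle z',z''\rangle_\cG={}_\Ga\langle z,z'\rangle\cdot z''$ together with left (respectively right) invariance of the Haar systems on $\Ga$ and $\cG$. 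The simpler identities --- associativity of the left $\cC_c(\Ga;\cF)$-action on $\cC_c(Z;\cX)$, sesquilinearity of the inner products, the adjoint relation ${}_{\cC_c(\Ga;\cF)}\langle\phi,\psi\rangle^\ast={}_{\cC_c(\Ga;\cF)}\langle\psi,\phi\rangle$, and commutation of the left and right actions --- follow the same Fubini-and-pointwise pattern, invoking associativity of the Fell-bundle multiplication, the axioms of Definition~\ref{df:fell_system}, and the inversion identities ${}_\Ga\langle z,z'\rangle^{-1}={}_\Ga\langle z',z\rangle$, $\langle z,z'\rangle_\cG^{-1}=\langle z',z\rangle_\cG$.

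For fullness, I will show that ${}_{\cC_c(\Ga;\cF)}\langle \cC_c(Z;\cX),\cC_c(Z;\cX)\rangle$ is dense in $\cC_c(\Ga;\cF)$ for the inductive-limit topology, the $\cE$-valued case being symmetric. Since the image of the fibrewise map $\cX_z\times\overline{\cX}_{\flat(z')}\to\cF_{{}_\Ga\langle z,z'\rangle}$ spans a dense subspace by Definition~\ref{df:equiv-Fell}(iii), one first approximates $h(\g_0)$ for $\g_0\in\Ga$ by a finite sum $\sum_i{}_\cF\langle u_i,u'_i\rangle$, then globalizes via a partition of unity on $\Ga$, using local sections of the principal bundle $Z\to\Gamo$ to spread the $u_i,u'_i$ into genuine elements of $\cC_c(Z;\cX)$; a compactness-and-cutoff argument closes the approximation. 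The step I expect to be the main obstacle is the bimodule compatibility in (b): keeping the two principal actions of $\Ga$ and $\cG$ on $Z$ straight while choosing the correct order of Fubini exchanges and changes of variables so that the identity $z\cdot\langle z',z''\rangle_\cG={}_\Ga\langle z,z'\rangle\cdot z''$ meshes cleanly with Definition~\ref{df:equiv-Fell}(ii) is the one place where careful bookkeeping is genuinely necessary.
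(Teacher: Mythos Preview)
Your outline is sound, but you should know that the paper does not actually prove this proposition: its entire proof is the sentence ``See~\cite{MW2} or~\cite{M}.'' So there is no in-paper argument to compare against directly. What the paper does offer is Remark~\ref{rem:approx-identity} immediately afterwards, which singles out the crucial ingredient from the cited sources: \cite[Proposition~6.10]{MW2} constructs a net $\{f_\lambda\}$ in $\cC_c(\Ga;\cF)$ of the special form $f_\lambda=\sum_{i=1}^{n_\lambda}\Lip<\ph_i^\lambda,\ph_i^\lambda>$ that is an approximate identity in the inductive-limit topology for the left action of $\cC_c(\Ga;\cF)$ on both itself and $\cC_c(Z;\cX)$.

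This is where your approach to part (c) genuinely diverges. You propose to prove fullness directly: approximate a target section pointwise using the fibrewise fullness axiom Definition~\ref{df:equiv-Fell}(iii), then patch via a partition of unity on $\Ga$ and local sections of the principal bundle $Z\to\Gamo$. The Muhly--Williams route instead builds the special approximate identity first; fullness is then immediate, since any $\xi\in\cC_c(\Ga;\cF)$ is the inductive limit of $f_\lambda\ast\xi$, and each $f_\lambda\ast\xi$ already lies in the span of the inner products. Your direct argument is more elementary and avoids the approximate-identity machinery, but note that the paper needs that machinery anyway: it is invoked again in the proof of Proposition~\ref{pro:p_Ga-p_G-compl} to show the projections $p_\Ga,p_\cG$ are full. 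So the approximate-identity construction is not just a device for this proposition but a recurring tool, which is presumably why the paper emphasizes it.

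Your parts (a) and (b) are standard and correctly identified; the bookkeeping you flag in (b) is indeed the only place requiring care, and your diagnosis of which pointwise identity and which Haar invariance are needed is accurate.
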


\begin{proof}
See ~\cite{MW2} or ~\cite{M}.	
\end{proof}

\medskip
We will adopt the following notations.

\medskip

\begin{nota}~\label{nota:inner-prod}
1. For the sake of simplicity, we will sometimes write $\Lip<\cdot,\cdot>$ for ${}_{\cC_c(\Ga;\cF)}\<\cdot,\cdot\>$ and $\Rip<\cdot,\cdot>$ for $\<\cdot,\cdot\>_{\cC_c(\cG;\cE)}$. \\
2. As in ~\cite{SW}, if $\xi\in \cC_c(\cG;\cE), \eta\in \cC_c(\Ga;\cF)$, and $\phi,\psi\in \cC_c(Z^{-1};\overline{\cX})$, we will write $\xi\mydot \phi$ and $\phi\mydot \eta$ for the left and right actions of $\cC_c(\cG;\cE)$ and $\cC_c(\Ga;\cF)$ on $\cC_c(Z^{-1};\overline{\cX})$, respectively, and we will write $\tLip<\phi,\psi>$ for ${}_{\cC_c(\cG;\cE)}\<\phi,\psi\>$ and $\tRip<\phi,\psi>$ for $\<\phi,\psi\>_{\cC_c(\Ga;\cF)}$. 
\end{nota}

\medskip

\begin{rem}~\label{rem:approx-identity}
We should note that the proof of Proposition~\ref{pro:full-Ga-G-bimodule} is mostly based on the crucial result proved in ~\cite[Proposition 6.10]{MW2} that guarantees the existence of a net $\{f_\lambda\}_{\lambda\in \Lam}$ in $\cC_c(\Ga;\cF)$ of the form $f_\lambda=\sum_{i=1}^{n_\lambda}\Lip<\ph_i^\lambda,\ph_i^\lambda>$, with each $\ph_i^\lambda\in \cC_c(Z;\cX)$, which is an approximate identity with respect to the inductive limit topology for both the left action of $\cC_c(\Ga;\cF)$ on itself and on $\cC_c(Z;\cX)$. By symmetry, a similar statement holds for $(\cG,\cE)$. In particular, by Example~\ref{ex:reflexivity}, the same result  shows that for any Fell system $(\cG,\cE)$, $\cC_c(\cG;\cE)$ admits an approximate identity for the inductive limit topology.
\end{rem}

\section{The linking Fell system}

In this section, we use some constructions from ~\cite[Chapter 6]{Parav} and ~\cite[\S.2]{MRW1}. 
If $\Ga\sim_Z\cG$, then form the \emph{Linking groupoid} $\xymatrix{\cM \dar[r] &\cM^{(0)}}$ by setting: $\cM:=\Ga\sqcup Z\sqcup Z^{-1}\sqcup \cG$, and $\cM^{(0)}:=\Gamo\sqcup\Gpdo$, the source and range maps $s_\cM$ and $r_\cM$ being the obvious ones. The partial multiplication of $\cM$ is given by
\begin{align*} 
\cM^{(2)}\To \cM,  \left\{ \begin{array}{llll}
 (\g_1,\g_2) & \in \Ga^{(2)}: & \g_1\g_2 & \in \Ga\\
 (\g,z) & \in \Ga\ast Z: & \g.z & \in Z\\
 (z,\flat(z')) & \in Z\times_{\Gpdo}Z^{-1}: & z.\flat(z'):={}_\Ga\langle z,z'\rangle & \in \Ga \\
 (z,g) & \in Z\ast\cG: & z.g & \in Z \\
 (\flat(z),z') & \in Z^{-1}\times_{\Gamo}Z: & \flat(z).z':=\langle z,z'\rangle_\cG & \in \cG\\
 (\flat(z),\g) & \in Z^{-1}\ast\Ga: & \flat(z).\g:=\flat(\g^{-1}.z) & \in Z^{-1}\\
 (g,\flat(z)) & \in \cG\ast Z^{-1}: & g.\flat(z):=\flat(zg^{-1}) & \in Z^{-1}\\
(g_1,g_2) & \in \cG^{(2)}: & g_1g_2 & \in \cG
 	\end{array}\right\},	
 \end{align*} 
so that $\cM^{(2)}=\Ga^{(2)}\sqcup \Ga\ast Z\sqcup Z\times_{\Gpdo}Z^{-1}\sqcup Z\ast\cG \sqcup Z^{-1}\ast\Ga \sqcup Z^{-1}\times_{\Gamo}Z \sqcup \cG\ast Z^{-1}\sqcup \cG^{(2)}$. Finally, the inversion in $\cM$ is defined by
\begin{align*}
\cM\To \cM, \left\{ \begin{array}{llll}
	\Ga & \ni \g & \mto \g^{-1} & \in \Ga \\
	Z & \ni z & \mto \flat(z) & \in Z^{-1}\\
	Z^{-1} & \ni \flat(z) & \mto z & \in Z \\
	\cG & \ni g & \mto g^{-1} & \in \cG
          \end{array}\right\}.	
\end{align*}
With these structures, $\xymatrix{\cM \dar[r] & \cM^{(0)}}$ is a locally compact Hausdorff groupoid with open source and range maps ( ~\cite[Proposition 6.2.2]{Parav}). \\
Now, let $\mu_\Ga$ and $\mu_\cG$ be left Haar systems on $\Ga$ and $\cG$, respectively. Then, if $\Ga\sim_Z\cG$, there exists a full $\fr$-system~\footnote{See for instance ~\cite{Ren1} for the definition.} $\mu_Z=\{\mu_Z^y\}_{y\in \Gamo}$ of Radon measures on $Z$ determined by 
\begin{eqnarray}
		\mu_Z^y(\phi):=\int_{\cG^{\fs(z)}}\phi(z\cdot g)d\mu_\cG^{\fs(z)}(g),
	\end{eqnarray}
for all $y\in \Gamo$ and $\phi\in \cC_c(Z)$, where $z$ is some arbitrary element of the fibre $Z_y=\fr^{-1}(y)$. Furthermore, $\mu_Z$ is a left Haar system on $Z$ for the left action of $\Ga$; that is, for all $\g\in \Ga$ and $\phi\in \cC_c(Z)$, we have $	\int_{Z_{r(\g)}}\phi(z)d\mu_Z^{r(\g)}(z)=\int_{Z_{s(\g)}}\phi(\g.z)d\mu_Z^{s(\g)}(z)$ (see ~\cite[\S.6.4]{Parav}, ~\cite{SW}). Similarly, considering the inverse $Z^{-1}:\cG\To \Ga$, the Haar system $\mu_\Ga$ induces a left Haar system $\mu_{Z^{-1}}=\{\mu_{Z^{-1}}^x\}_{x\in\Gpdo}$ on $Z^{-1}$ for left action of $\cG$. Note that we have $\text{supp}\mu_{Z^{-1}}^x=(\fr^\flat)^{-1}(x)=Z^{-1}_x$, and that for $\phi\in \cC_c(Z^{-1})$ and $\flat(z)\in Z^{-1}_x$, we have
\begin{eqnarray}
	\mu_{Z^{-1}}^x(\phi):=\int_{\Ga^{\fs^\flat(\flat(z))}=\Ga^{\fr(z)}}\phi(\flat(\g^{-1}z))d\mu_\Ga^{\fr(z)}(\g).	
	\end{eqnarray}	

Moreover, $\mu_\Ga,\mu_\cG, \mu_Z$, and $\mu_{Z^{-1}}$ induces a left Haar system $\mu_\cM$ on $\cM$ as it is shown in the following proposition.

\medskip

\begin{pro} 
Assume $\Ga\sim_Z\cG$, and $\mu_\Ga$ and $\mu_\cG$ are left Haar systems on $\Ga$ and $\cG$, respectively. Then, under the above constructions, there is a left Haar system $\mu_\cM=\{\mu_\cM^\omega\}_{\omega \in \Mo}$ on the linking groupoid $\cM$ determined by 
\begin{eqnarray}
	\mu_\cM^\omega(F):= \left\{ \begin{array}{ll}
	\mu_\Ga^\omega(F_{|\Ga})+\mu_Z^{\omega}(F_{|Z}), & \text{if}  \ \omega \in \Gamo, \ \text{and} \\
	\mu_{Z^{-1}}^\omega(F_{|Z^{-1}})+\mu_\cG^\omega(F_{|\cG}), & \text{if}  \ \omega \in \Gpdo,		
	\end{array} 
	\right.	
	\end{eqnarray}	
for all $\omega\in \Mo$ and $F\in \cC_c(\cM)$. 	
\end{pro}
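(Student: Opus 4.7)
The plan is to verify the three defining properties of a left Haar system on $\cM$: correct support of each $\mu_\cM^\omega$, continuity of $\omega \mapsto \mu_\cM^\omega(F)$ for every $F\in\cC_c(\cM)$, and left invariance under translations by arbitrary $m\in\cM$.

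As preparation, note that $\Mo = \Gamo \sqcup \Gpdo$ and $\cM = \Ga \sqcup Z \sqcup Z^{-1} \sqcup \cG$ are disjoint unions of clopen subsets, so each $F\in\cC_c(\cM)$ splits as $F = F_\Ga + F_Z + F_{Z^{-1}} + F_\cG$ with summands supported on the respective pieces. For $\omega\in\Gamo$ one checks directly $r_\cM^{-1}(\omega) = \Ga^\omega \sqcup \fr^{-1}(\omega)$, which is exactly the support of the prescribed $\mu_\Ga^\omega + \mu_Z^\omega$; the case $\omega\in\Gpdo$ is analogous. Continuity of $\omega\mapsto\mu_\cM^\omega(F)$ on each clopen component of $\Mo$ then follows from the continuity of the four constituent systems $\mu_\Ga,\mu_\cG,\mu_Z,\mu_{Z^{-1}}$, the latter two being established in \cite{Parav} and \cite{SW}.

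The main step is left invariance, which we verify according to which piece contains $m$. If $m = \g\in\Ga$, the left translation $\g\cdot : \cM^{s(\g)}\to\cM^{r(\g)}$ restricts to $\Ga^{s(\g)}\to\Ga^{r(\g)}$ by $\g'\mapsto\g\g'$ and to $Z_{s(\g)}\to Z_{r(\g)}$ by $z\mapsto\g\cdot z$, so invariance of $\mu_\cM^\omega$ in this case reduces to left invariance of $\mu_\Ga$ together with the fact that $\mu_Z$ is a left Haar system for the $\Ga$-action on $Z$. If $m = g\in\cG$, a symmetric argument handles $\mu_\cG$ and $\mu_{Z^{-1}}$.

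The mixed cases $m = z\in Z$ and $m = \flat(z)\in Z^{-1}$ are where the inner-product identities enter. For $m = z$, left multiplication maps $\cG^{\fs(z)}\ni g\mapsto zg\in Z_{\fr(z)}$ and $(Z^{-1})_{\fs(z)}\ni\flat(z')\mapsto {}_\Ga\langle z,z'\rangle \in \Ga^{\fr(z)}$. The first correspondence reproduces the very definition of $\mu_Z^{\fr(z)}$, i.e.
\[
\int_{\cG^{\fs(z)}} F(zg)\,d\mu_\cG^{\fs(z)}(g) = \mu_Z^{\fr(z)}(F_Z).
\]
For the second, we apply the defining formula for $\mu_{Z^{-1}}^{\fs(z)}$ using $z$ as the representative and exploit the identity ${}_\Ga\langle z,\g^{-1}z\rangle = \g$ (which is immediate from the definition of the bracket) to obtain
\[
\int_{(Z^{-1})_{\fs(z)}} F({}_\Ga\langle z,z'\rangle)\,d\mu_{Z^{-1}}^{\fs(z)}(\flat(z')) = \int_{\Ga^{\fr(z)}} F(\g)\,d\mu_\Ga^{\fr(z)}(\g) = \mu_\Ga^{\fr(z)}(F_\Ga).
\]
Adding the two integrals reproduces $\mu_\cM^{\fr(z)}(F)$, as required. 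The case $m=\flat(z)\in Z^{-1}$ is entirely symmetric. The principal obstacle is the bookkeeping in these mixed cases: one must track carefully how each translation shuffles the four clopen pieces of $\cM$ and confirm that the prescribed formulas for $\mu_Z$ and $\mu_{Z^{-1}}$ absorb the resulting integrals via the identities relating ${}_\Ga\langle\cdot,\cdot\rangle$, $\langle\cdot,\cdot\rangle_\cG$, and the two principal actions. No analytic input is needed beyond what was already used to construct the individual Haar systems.
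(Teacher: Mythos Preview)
Your argument is correct. The paper itself does not supply a proof of this proposition at all; it simply cites \cite[Proposition 6.4.5]{Parav} and \cite[Lemma 4]{SW}. What you have written is essentially the direct verification those references carry out: check full support, continuity, and left invariance case by case over the clopen decomposition $\cM=\Ga\sqcup Z\sqcup Z^{-1}\sqcup\cG$. The only point worth flagging is that in the mixed case $m=z$ you silently use that the defining formulas for $\mu_Z^y$ and $\mu_{Z^{-1}}^x$ are independent of the representative point chosen in the fibre; this is exactly what lets you pick the \emph{same} $z$ as the base point when unwinding $\mu_{Z^{-1}}^{\fs(z)}$ and thereby reduce ${}_\Ga\langle z,\g^{-1}z\rangle$ to $\g$. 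That independence is stated (and proved) in the references the paper cites, so your appeal to it is legitimate.
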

\begin{proof}
See ~\cite[Proposition 6.4.5]{Parav}, or ~\cite[Lemma 4]{SW}.	
\end{proof}

\medskip

\begin{pro}
Suppose $(\Ga,\cF)\sim_{(Z,\cX)}(\cG,\cE)$. Then we define a Banach bundle $\cL$ over the linking groupoid $\cM$, where  $\cL$ is the topological space $\cL:=\cF\sqcup \cX \sqcup \overline{\cX}\sqcup\cE$, the projection is given by
 \begin{eqnarray}
 	p^{\cL}: \cL\To \cM, \ \left\{ \begin{array}{llll}
 		\cF & \ni f & \mto p^\cF(f) & \in \Ga\\
 		\cX & \ni u & \mto \pi(u) & \in Z \\
 		\overline{\cX} & \ni \flat(v) & \mto \flat(\pi(v)) & \in Z^{-1} \\
 		\cE & \ni e & \mto p^\cE(e) & \in \cG
 	\end{array}
 	\right\}.	
 	\end{eqnarray}
 Moreover, $p^\cL:\cL\To \cM$ is a Fell bundle with respect to the multiplication $\cL^{[2]}\To \fm^\ast\cL$ and involution $({}^\ast):\cL\To \cL$ respectively given by 
 \begin{eqnarray}~\label{eq:mult_Linking_Fell}
  \left\{ \begin{array}{llll}
 \cF_{\g_1}\times\cF_{\g_2} & \ni (f_1,f_2) & \mto f_1f_2 & \in \cF_{\g_1\g_2}, \ \text{for} \ (\g_1,g_2)\in \Ga^{(2)}\\
 \cF_\g\times \cX_z & \ni (f,u) & \mto f\cdot u & \in \cX_{\g\cdot z},\ \text{for} \ (\g,z)\in \Ga\ast Z \\
 \cX_{z_1}\times \overline{\cX}_{\flat(z_2)} & \ni (u,\flat(v)) & \mto {}_\cF\langle u,v\rangle & \in \cF_{{}_\Ga<z_1,z_2>},\ \text{for} \ (z_1,\flat(z_2))\in Z \times_{\Gpdo}Z^{-1} \\
 \cX_z\times \cE_g & \ni (u,e) & \mto u\cdot e & \in \cX_{zg}, \ \text{for} \ (z,g)\in Z\ast\cG \\
 \overline{\cX}_{\flat(z)}\times \cF_\g & \ni (\flat(u),f) & \mto \flat(f^\ast\cdot u) & \in \overline{\cX}_{\flat(\g^{-1}z)},\ \text{for} \ (\flat(z),\g)\times Z^{-1}\ast\Ga \\
  \overline{\cX}_{\flat(z_1)}\times \cX_{z_2} & \ni (\flat(u),v) & \mto \langle u,v\rangle_\cE & \in \cE_{<z_1,z_2>_\cG}, \ \text{for} \ (\flat(z_1),z_2)\in Z^{-1}\times_{\Gamo}Z\\
  \cE_g\times \overline{\cX}_{\flat(z)} & \ni (e,\flat(u)) & \mto \flat(u\cdot e^\ast) & \in \overline{\cX}_{\flat(zg^{-1})},\ \text{for} \ (g,\flat(z))\in \cG\ast Z^{-1}\\
  \cE_g\times \cE_h & \ni (e_1,e_2) & \mto e_1e_2 & \in \cE_{gh}, \ \text{for} \ (g,h)\in \cG^{(2)}	
 \end{array}  \right\}			
 		\end{eqnarray}	
and 

\begin{eqnarray}~\label{eq:star_Linking_Fell}
 ({}^\ast):\cL\to \cL, \ \left\{ \begin{array}{llll}
 \cF_\g & \ni f & \mto f^\ast & \in \cF_{\g^{-1}}, \ \text{for} \ \g\in \Ga \\
 \cX_z & \ni u & \mto \flat(u) & \in \overline{\cX}_{\flat(z)}, \ \text{for} \ z\in Z\\
 \overline{\cX}_{\flat(z)} & \ni \flat(v) & \mto v & \in \cX_z, \ \text{for} \ \flat(z) \in Z^{-1}\\
 \cE_g & \ni e & \mto e^\ast & \in \cE_{g^{-1}}, \ \text{for} \ g\in \cG \end{array}\right\}.				
 			\end{eqnarray} 
 $\cL$ is called the \texttt{linking Fell bundle}, and $(\cM,\cL)$ is the \texttt{linking Fell system}.							
\end{pro}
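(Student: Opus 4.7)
The plan is to verify the axioms of Definition~\ref{df:fell_system} for $\cL \to \cM$ by a case analysis following the decompositions $\cM = \Ga \sqcup Z \sqcup Z^{-1} \sqcup \cG$ and $\cL = \cF \sqcup \cX \sqcup \overline{\cX} \sqcup \cE$. First I would equip $\cL$ with the disjoint-union topology; since each of $\cF$, $\cX$, $\overline{\cX}$, $\cE$ is a Banach bundle over the corresponding piece of $\cM$, the projection $p^{\cL}$ becomes a Banach bundle over $\cM$. Continuity of the multiplication on each of the eight components of $\cL^{[2]}$ and of the involution on each of the four components of $\cL$ is immediate from the continuity of the structure maps of $(\cG,\cE)$, $(\Ga,\cF)$, of the two Fell-pair actions that make up the equivalence $(Z,\cX)$, and of the $\cE$- and $\cF$-valued inner products.

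For the algebraic axioms, bilinearity on fibers and the norm bound $\|\ell_1\ell_2\| \le \|\ell_1\|\|\ell_2\|$ follow, case by case, from the corresponding clauses in the definitions of Fell bundle, Fell pair, and inner product (in the inner-product cases, the Cauchy--Schwarz bound is legitimate since each $\cX_z$ is a pre-imprimitivity bimodule by Remark~\ref{rem:equiv-symmetry}). Associativity $(\ell_1\ell_2)\ell_3 = \ell_1(\ell_2\ell_3)$ splits into a routine list: the pure cases inside $\cF$ or $\cE$ are associativity in those Fell bundles; cases of type $\cF\cdot\cF\cdot \cX$ or $\cX\cdot\cE\cdot\cE$ are Fell-pair associativity; cases mixing one of $\cF$, $\cE$ with $\cX$ and $\overline{\cX}$ reduce to the equivariance axiom of Definition~\ref{df:equiv-Fell}(i) and to the $\cE$- and $\cF$-linearity of the inner products; and the symmetric cases $\cX \cdot \overline{\cX} \cdot \cX$ and $\overline{\cX} \cdot \cX \cdot \overline{\cX}$ are exactly the compatibility condition of Definition~\ref{df:equiv-Fell}(ii).

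For the involution, $p^{\cL}(\ell^\ast) = p^{\cL}(\ell)^{-1}$ and $2$-periodicity are read off from~\eqref{eq:star_Linking_Fell}, and conjugate linearity on fibers in the $\cX \leftrightarrow \overline{\cX}$ cases comes from the conjugate-bundle construction of Lemma~\ref{lem:right_vs_left-action}. The identity $(\ell_1\ell_2)^\ast = \ell_2^\ast \ell_1^\ast$ is verified on each of the eight pieces of $\cL^{[2]}$; representative cross-cases such as $(f\cdot u)^\ast = \flat(u)\cdot f^\ast$ and $({}_\cF\<u,v\>)^\ast = {}_\cF\<v,u\>$ are direct consequences of~\eqref{eq:mult_Linking_Fell}--\eqref{eq:star_Linking_Fell} and axiom~(iii) of the inner products. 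The C*-identity and positivity $\ell^\ast\ell \ge 0$ only need to be checked over the units $\cM^{(0)} = \Gamo \sqcup \Gpdo$, where the fibers are the C*-algebras $\cF_y$ and $\cE_x$ of the original Fell systems; positivity for $u \in \cX_z$ is $\<u,u\>_\cE \ge 0$, which is axiom~(iv) of the $\cE$-valued inner product (and symmetrically for $\overline{\cX}$). Finally, fullness assembles from fullness in $\cF$ and $\cE$ on the pure pieces, faithfulness of the Fell-pair actions on the $\Ga Z$, $Z\cG$, $Z^{-1}\Ga$, $\cG Z^{-1}$ pieces, and fullness of the two inner products on the $Z\times_{\Gpdo} Z^{-1}$ and $Z^{-1}\times_{\Gamo} Z$ pieces (Definition~\ref{df:equiv-Fell}(iii),(iv)). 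The main obstacle is not mathematical depth but bookkeeping: the eight-fold decomposition of $\cM^{(2)}$ iterated for associativity produces a long list of cases, each matching exactly one axiom of the equivalence data, and the $Z \leftrightarrow Z^{-1}$ symmetry of Lemma~\ref{lem:right_vs_left-action} is the chief tool for cutting the work roughly in half.
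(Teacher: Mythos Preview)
Your proposal is correct and follows exactly the same approach as the paper: the paper's proof simply asserts that $p^\cL$ is a Banach bundle and that the axioms of Definition~\ref{df:fell_system} follow by applying Definition~\ref{df:equiv-Fell} to the equivalences $(Z,\cX)$ and $(Z^{-1},\overline{\cX})$. You have merely spelled out the case-by-case bookkeeping that the paper leaves implicit, matching each Fell-bundle axiom to the appropriate clause of the equivalence data and invoking Lemma~\ref{lem:right_vs_left-action} for the $Z\leftrightarrow Z^{-1}$ symmetry, which is precisely what the paper's one-line proof is gesturing at.
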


\begin{proof}
It is clear that $p^\cL:\cL\To \cM$ is a Banach bundle. Next, observe that all of the conditions of Definition~\ref{df:fell_system} are verified by the operations~\eqref{eq:mult_Linking_Fell} and~\eqref{eq:star_Linking_Fell} by merely applying Definition~\ref{df:equiv-Fell} to the equivalences $(Z,\cX)$ and $(Z^{-1},\overline{\cX})$.	
\end{proof}

At this point, we can do integration on $\cM$ with values on the the linking Fell bundle $\cL$. We then can form the convolution ${}^\ast$-algebra $\cC_c(\cM;\cL)$. Note that we have an isomorphism of convolution ${}^\ast$-algebras $\cC_c(\cM;\cL)\cong \cC_c(\Ga;\cF)\oplus \cC_c(Z;\cX)\oplus \cC_c(Z^{-1};\overline{\cX})\oplus \cC_c(\cG;\cE)$; so that an element $\xi\in \cC_c(\cM;\cL)$ can be written as a matrix 
\[\xi= \left( \begin{array}{ll}
            \xi_{11} & \xi_{12} \\
            \xi_{21} & \xi_{22}
	
\end{array}   \right),\]
where $\xi_{11}:=\xi_{|\Ga}\in \cC_c(\Ga;\cF), \ \xi_{12}:=\xi_{|Z}\in \cC_c(Z;\cX),\ \xi_{21}:=\xi_{|Z^{-1}}\in \cC_c(Z^{-1};\overline{\cX})$, and $\xi_{22}:=\xi_{|\cG}\in \cC_c(\cG;\cE)$. With respect to this decomposition, the involution in $\cC_c(\cM;\cL)$ is given by
\[ \xi^\ast= \left( \begin{array}{ll}
\xi_{11}^\ast & \xi_{21}^\ast \\
\xi_{12}^\ast & \xi_{22}^\ast
	
\end{array}
\right) = \left( \begin{array}{ll}
\xi_{11}^\ast & \flat \circ \xi_{21}\circ \flat \\
\flat\circ \xi_{12}\circ \flat & \xi_{22}^\ast 	
\end{array}
\right),
\] 
where $\xi_{11}^\ast$ and $\xi_{22}^\ast$ are the images of $\xi_{11}$ and $\xi_{22}$ under the standard involutions in $\cC_c(\Ga;\cF)$ and $\cC_c(\cG;\cE)$, respectively. Furthermore, routine calculations (cf. ~\cite{M}) show that the convolution in $\cC_c(\cM;\cL)$ is given by
\begin{eqnarray}
 \left(\begin{array}{ll}
 \xi_{11} &\xi_{12} \\ & \\ \xi_{21} & \xi_{22}
 \end{array}  \right)	\ast \left( \begin{array}{ll}
 \eta_{11} & \eta_{12}\\ & \\ \eta_{21} & \eta_{22}
 \end{array}
 \right)  = \left( \begin{array}{ll}
 \xi_{11}\ast\eta_{11} + \Lip<\xi_{12},\eta_{21}^\ast> & \xi_{11}\cdot \eta_{12} + \xi_{12}\cdot \eta_{22} \\
 & \\
 (\eta^\ast_{11}\cdot \xi_{21}^\ast)^\ast+(\eta_{21}^\ast\cdot \xi_{22}^\ast)^\ast & \Rip<\xi_{21}^\ast,\eta_{12}> +\xi_{22}\ast\eta_{22}
 \end{array}  \right).	
 	\end{eqnarray}
 
\medskip 
 	 		
Suppose $(\Ga,\cF)\sim_{(Z,\cX)}(\cG,\cE)$. For $x\in \Gpdo$, we also denote by $\cX\To Z_x$ the pull-back of $\cX\To Z$ along the inclusion $Z_x\hookrightarrow Z$. Then, $L^2(Z_x;\cX)$ is the completion of $\cC_c(Z_x;\cX)$ with respect to the $A_x$-valued inner product $\Rip<\phi,\psi>(x)=\int_{\Ga^{\fr(z)}}\<\phi(\g^{-1}\cdot z),\psi(\g^{-1}\cdot z)\>_{\cE}d\mu_\Ga^{\fr(z)}(\g)$, where $\fs(z)=x$, and the right $A_x$-action $(\phi\cdot a)(z):=\phi(z)a$, for $\phi\in \cC_c(Z_x;\cX), a\in A_x$. Thus, $L^2(Z_x;\cX)$ is a Hilbert $A_x$-module. Similarly, for all $y\in \Gamo$, one can form the Hilbert $B_y$-module $L^2(Z^{-1}_y;\overline{\cX})$.\\
The following proposition will be crucial in the proof of the equivalence theorem (Theorem~\ref{thm:Renault_equiv}).

\begin{pro}~\label{pro:R_x}
Suppose $(\Ga,\cF)\sim_{(Z,\cX)}(\cG,\cE)$. For $x\in \Gpdo$, the left action of $\cC_c(\Ga;\cF)$ on $\cC_c(Z_x;\cX)$ induces a ${}^\ast$-representation $R_x^\Ga:\cC_c(\Ga;\cF)\To \cL_{A_x}(L^2(Z_x;\cX))$ that factors through the $\cstar$-algebra $\cstar_r(\Ga;\cF)$. Similarly, for all $y\in \Gamo$, we get a representation $R_y^{\cG}:\cstar_r(\cG;\cE)\To \cL_{B_y}(L^2(Z^{-1}_y;\overline{\cX}))$.					
\end{pro}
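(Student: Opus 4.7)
I would deduce this immediately from the linking Fell system of \S4 together with Lemma~\ref{lem:pi_x}. For $x\in\Gpdo$, the elements of $\cM$ whose source lies in $\Gpdo$ come only from $Z$ and $\cG$, so the source fiber $\cM_x$ of the linking groupoid decomposes as $Z_x\sqcup\cG_x$, which yields an orthogonal decomposition of Hilbert $A_x$-modules
\[
L^2(\cM_x;\cL) \cong L^2(Z_x;\cX)\oplus L^2(\cG_x;\cE).
\]
Reading off the multiplication table of $\cM$, the only compositions with left factor in $\Gamma$ are $\Gamma\cdot\Gamma$ and $\Gamma\cdot Z$. Hence, for $\xi\in\cC_c(\Gamma;\cF)$ extended by zero to an element of $\cC_c(\cM;\cL)$, the operator $\pi_x^\cM(\xi)$ respects this decomposition, annihilates $L^2(\cG_x;\cE)$, and coincides with $R_x^\Gamma(\xi)$ on $L^2(Z_x;\cX)$. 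Since $\pi_x^\cM$ is a $\ast$-representation by Lemma~\ref{lem:pi_x} applied to $(\cM,\cL)$, so is $R_x^\Gamma$.

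\textbf{Factoring through $\cstar_r(\Gamma;\cF)$.} To obtain the reduced-norm bound, I would use a Rieffel-style decomposition of $L^2(Z_x;\cX)$ along a single $\Gamma$-orbit. If $Z_x=\emptyset$ there is nothing to prove, so fix $z_0\in Z_x$ and set $y_0:=\fr(z_0)$. Since $Z\to\Gpdo$ is a left principal $\Gamma$-bundle via $\fs$, the map $\Gamma_{y_0}\ni\delta\mapsto\delta\cdot z_0\in Z_x$ is a $\Gamma$-equivariant homeomorphism, and, after the change of variable $\delta=\gamma^{-1}$, the integration formulas defining the inner products on $L^2(Z_x;\cX)$ and on $L^2(\Gamma_{y_0};\cF)$ match. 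Using the (full) multiplication $\cF_\delta\times\cX_{z_0}\to\cX_{\delta z_0}$ supplied by Definition~\ref{df:equiv-Fell}, I would build a unitary isomorphism of Hilbert $A_x$-modules
\[
T_{z_0}: L^2(\Gamma_{y_0};\cF)\otimes_{B_{y_0}}\cX_{z_0}\xrightarrow{\ \cong\ } L^2(Z_x;\cX),\qquad f\otimes u\longmapsto\bigl(\delta\cdot z_0\mapsto f(\delta)\cdot u\bigr),
\]
where $\cX_{z_0}$ is completed to the $B_{y_0}$-$A_x$-imprimitivity bimodule of Remark~\ref{rem:equiv-symmetry}. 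Isometry would rest on the $\cE$-linearity of $\langle\cdot,\cdot\rangle_\cE$ and on the compatibility axiom, while surjectivity would use fullness of the $\cE$-valued inner product.

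\textbf{Conclusion and symmetry.} Under the intertwiner $T_{z_0}$, the operator $R_x^\Gamma(\xi)$ corresponds to $\pi_{y_0}^\Gamma(\xi)\otimes 1_{\cX_{z_0}}$. Rieffel's standard estimate $\|T\otimes 1\|\le\|T\|$ for interior tensor products of Hilbert modules, combined with Lemma~\ref{lem:pi_x} applied to $(\Gamma,\cF)$, then yields
\[
\|R_x^\Gamma(\xi)\| = \bigl\|\pi_{y_0}^\Gamma(\xi)\otimes 1\bigr\| \le \bigl\|\pi_{y_0}^\Gamma(\xi)\bigr\| \le \|\xi\|_{\cstar_r(\Gamma;\cF)},
\]
so $R_x^\Gamma$ descends to a $\ast$-representation of $\cstar_r(\Gamma;\cF)$. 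The statement for $R_y^\cG$ is obtained by applying the same argument to the reverse equivalence $(\cG,\cE)\sim_{(Z^{-1},\overline{\cX})}(\Gamma,\cF)$ of Remark~\ref{rem:equiv-symmetry}. The main technical hurdle is the construction of $T_{z_0}$: one must carefully track the identification of the induced Haar measure on $\Gamma_{y_0}$ with the fiber measure on $Z_x$ coming from $\mu_Z$, and verify that the pre-inner product on the algebraic tensor product matches the $A_x$-valued pairing on $L^2(Z_x;\cX)$, which in turn reduces to the compatibility, $\cE$-linearity, and fullness axioms of the Fell equivalence.
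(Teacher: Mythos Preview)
Your argument is correct and, for the crucial ``factoring through'' step, essentially identical to the paper's proof: both fix $z_0\in Z_x$ with $y_0=\fr(z_0)$, build the unitary $L^2(\Gamma_{y_0};\cF)\otimes_{B_{y_0}}\cX_{z_0}\to L^2(Z_x;\cX)$ (the paper calls it $u_{z_0}$, you call it $T_{z_0}$), identify $R_x^\Gamma(\xi)$ with $\pi_{y_0}^\Gamma(\xi)\otimes 1$, and invoke $\|T\otimes 1\|\le\|T\|$.

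Two remarks. First, for the $\ast$-representation part you take a different route: the paper simply checks $\langle\xi\cdot\phi,\psi\rangle_\star(x)=\langle\phi,\xi^\ast\cdot\psi\rangle_\star(x)$ by direct computation, whereas you embed into the linking system and restrict $\pi_x^{\cM}$ to the summand $L^2(Z_x;\cX)\subset L^2(\cM_x;\cL)$. This works fine (and is arguably more conceptual), since the linking Fell bundle is already in place by this point and $\pi_x^{\cM}(\xi)$ is indeed block-diagonal for $\xi$ supported on $\Gamma$; the paper's direct check is a few lines shorter and avoids invoking \S4.

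Second, one small slip: surjectivity of $T_{z_0}$ does not come from fullness of the $\cE$-valued inner product, but from \emph{faithfulness} of the left Fell $\Gamma$-pair (condition (iv) in the definition of a Fell pair), i.e., density of $\operatorname{span}\{f\cdot u:f\in\cF_{{}_\Gamma\langle z,z_0\rangle},\,u\in\cX_{z_0}\}$ in $\cX_{{}_\Gamma\langle z,z_0\rangle\cdot z_0}$, combined with the homeomorphism $Z_x\cong\Gamma_{y_0}$ and a Stone--Weierstrass-type density argument. The $\cE$-valued inner product plays no role in that step.
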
 

\begin{proof}
 Let $\xi\in \cC_c(\Ga;\cF)$; then for $\phi,\psi\in \cC_c(Z_x;\cX)$, simple calculations give $\Rip<\xi\cdot \phi,\psi>(x)=\Rip<\phi,\xi^\ast \cdot \psi>(x)$. It follows that the $A_x$-linear operator $\cC_c(Z_x;\cX)\ni \phi\mto \xi\cdot \phi\in \cC_c(Z_x;\cX)$ is adjointable, and then bounded with respect to the norm $\|\cdot\|_{L^2(Z_x;\cX)}$, which gives the ${}^\ast$-representation $R_x^\Ga:\cC_c(\Ga;\cF)\To \cL_{A_x}(L^2(Z_x;\cX)), \xi\To (R_x^\Ga(\xi):\phi\mto \xi\cdot\phi)$.\\
 Now, let $z_0\in Z_x$, and let $y:=\fr(z_0)$. Then, to complete the proof it suffices to check that for all $\xi\in \cC_c(\Ga;\cF)$, $\|R_x^\Ga(\xi)\|\le \|\pi_y^\Ga(\xi)\|$, where $\pi_y^\Ga:\cC_c(\Ga;\cF)\To \cL_{B_y}(L^2(\Ga_y;\cF))$ is the representation defined in Lemma~\ref{lem:pi_x}.\\
Consider the (left) Hilbert $B_y$-module $\cX_{z_0}$, and form the interior tensor product $L^2(\Ga_y;\cF)\otimes_{B_y}\cX_{z_0}$ which is a right Hilbert $A_x$-module under the operations defined on simple tensors by: $(\xi\otimes u)\cdot a:=\xi\otimes (ua)$, and $\<\xi\otimes u,\eta\otimes v\>:=\<u,\<\xi,\eta\>_{B_y}\cdot v\>_{A_x}$. Then, the map
\begin{align}~\label{eq:density:FX--X}
  	 u_{z_0}:L^2(\Ga_y;\cF)\otimes_{B_y}\cX_{z_0} \To L^2(Z_x;\cX), \ \sum_i \xi_i\otimes u_i \mto \sum_i\xi_i\cdot u_i, \end{align}
 where for $\xi\in \cC_c(\Ga_y;\cF)$ and $u\in \cX_{z_0}$, $(\xi\cdot u)(z):=\xi({}_\Ga<z,z_0>)\cdot u\in \cX_{{}_\Ga<z,z_0>\cdot z_0}$, is an isomorphism of Hilbert $A_x$-modules. The map~\eqref{eq:density:FX--X} is clearly $A_x$-linear and injective. To see that it is surjective, first notice that the well defined map $Z_x\ni z\mto {}_\Ga<z,z_0>\in \Ga_y,$ is a homeomorphism of $\Ga$-spaces (its inverse being $\Ga_y\ni \g\mto \g\cdot z_0\in Z_x$). Next, for all $z\in Z_x$, the linear span of the image of $\cF_{{}_\Ga<z,z_0>}\times \cX_{z_0}\ni (f,u)\mto f\cdot u\in \cX_{{}_\Ga<z,z_0>\cdot z_0}$ is dense in $\cX_{{}_\Ga<z,z_0>\cdot z_0}$ by definition of a Fell pair; so that, using the Weierstrass theorem, $$\text{span}\left\{\eta\cdot u: Z_x\ni z\mto \eta({}_\Ga<z,z_0>)\cdot u\in\cX_{{}_\Ga<z,z_0>\cdot z_0} \mid \eta\in \cC_c(\Ga_y;\cF), u\in \cX_{z_0}\right\}$$ is dense in $\cC_c(Z_x;\cX)$ in the inductive limit topology. It follows that any $\phi\in \cC_c(Z_x;\cX)$ is the inductive limit of some $\sum_i\eta_i\cdot u_i=u_{z_0}(\sum_i\eta_i\otimes u_i)$. We then have an isomorphism of $\cstar$-algebras $$\tilde{u}_{z_0}:\cL_{A_x}(L^2(\Ga_y;\cF)\otimes_{B_y}\cX_{z_0})\To \cL_{A_x}(L^2(Z_x;\cX))$$ such that $\tilde{u}_{z_0}(T)\left(\sum_i\xi_i\cdot u_i\right):=u_{z_0}\left(T\left(\sum_i\xi_i\otimes u_i\right)\right)$, for all $T\in \cL_{A_x}(L^2(\Ga_y;\cF)\otimes_{B_y}\cX_{z_0})$. Furthemore, the following diagram is commutative 
 \[
 \xymatrix{\cC_c(\Ga;\cF)\ar[d]_{\pi_y^\Ga} \ar[r]^{R_x^\Ga} & \cL_{A_x}(L^2(Z_x;\cX)) \\ 
 \cL_{B_y}(L^2(\Ga_y;\cF)) \ar[r] & \cL_{A_x}(L^2(\Ga_y;\cF)\otimes_{B_y}\cX_{z_0})\ar[u]_{\tilde{u}_{z_0}} }\]
 where the lower horizontal arrow is the map $T\mto T\otimes \Id$ (cf. for instance ~\cite[p.50]{Lan}). Indeed, let $\xi\in \cC_c(\Ga;\cF)$, and $\phi\in \cC_c(Z_x;\cX)$. Without loss of generality, we can suppose that $\phi=\eta\cdot u$; then, $$\tilde{u}_{z_0}(\pi_y^\Ga(\xi)\otimes \Id)\phi=(\pi_y^\Ga(\xi)\otimes\Id)(\eta\otimes u)=(\pi_y^\Ga(\xi)\eta)\otimes u=(\xi\ast\eta)\cdot u=\xi\cdot (\eta\cdot u)=R_x^\Ga(\xi)(\eta\otimes u)=R_x^\Ga(\xi)\phi,$$ which completes the proof since $u_{z_0}$ is an isomorphism and $\|\pi_y^\Ga(\xi)\otimes\Id\|\le \|\pi_y^\Ga(\xi)\|$ (see ~\cite[p.50]{Lan}).						
\end{proof}   	 					

\section{The equivalence theorem for reduced $\cstar$-algebras of Fell systems}

We start this section by the following observations. Let $(\cG,\cE)$ be a Fell system and let $A:=\Co(\Gpdo;\cE^{(0)})$ be as usual. Suppose we are given a bounded continuous section $f\in \cC_b(\Gpdo;\cE^{(0)})$. Then, for $\xi\in \cC_c(\cG;\cE)$, we define an element $L_f\xi=:f\xi\in \cC_c(\cG;\cE)$ by setting: 
\begin{eqnarray}~\label{eq:L_f}
	L_f\xi(g):=f(r(g))\xi(g)\in \cE_g, \ \text{for all} \ g\in \cG.
\end{eqnarray}
Also, we define an element $\xi f\in \cC_c(\cG;\cE)$ by 
\begin{eqnarray}
	\cG\ni g\mto \xi f(g):=\xi(g)f(s(g))\in \cE_g.
\end{eqnarray}
Notice that $\cC_b(\Gpdo;\cE_{|\Gpdo})$ is a $\cstar$-algebra under pointwise operations and the supremum norm (~\cite[Lemma 3.2]{APT}).

\medskip

\begin{lem}
For all $f\in \cC_b(\Gpdo;\cE^{(0)})$, we have $L_f\in \cL(L^2(\cG;\cE))$, where $L_f$ is the element defined by~\eqref{eq:L_f}. Moreover, the map $L:\cC_b(\Gpdo;\cE^{(0)})\ni f \mto L_f\in \cL_A(L^2(\cG;\cE))$ is a  ${}^\ast$-homomorphism.
\end{lem}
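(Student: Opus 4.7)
The plan is to verify three properties in sequence: $L_f$ extends to a bounded operator on $L^2(\cG;\cE)$, this extension is adjointable with $L_f^\ast = L_{f^\ast}$, and the assignment $f\mapsto L_f$ is $\CC$-linear and multiplicative. The common thread in every computation is that for $g\in \cG^x = r^{-1}(x)$ the range $r(g) = x$ is fixed throughout the integration, so $f(r(g)) = f(x)$ can be pulled outside of any integral against $d\mu_\cG^x$.

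For boundedness, I would start on $\cC_c(\cG;\cE)$ and compute
\[
{}_A\<L_f\xi,L_f\xi\>(x) \;=\; \int_{\cG^x} f(x)\xi(g)\xi(g)^\ast f(x)^\ast\, d\mu_\cG^x(g) \;=\; f(x)\cdot {}_A\<\xi,\xi\>(x)\cdot f(x)^\ast,
\]
where the first equality uses bilinearity of the multiplication in $\cE$ and $r(g)=x$, and the second pulls the constant factor $f(x)$ out of the integral. Applying the $\cstar$-inequality $\|aba^\ast\|\le \|a\|^2\|b\|$ fiberwise in $A_x$ and taking the supremum over $x\in \Gpdo$ gives $\|L_f\xi\|_2 \le \|f\|_\infty \|\xi\|_2$. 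By density of $\cC_c(\cG;\cE)$ in $L^2(\cG;\cE)$, $L_f$ extends uniquely to a bounded operator on the completion.

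For adjointability, I would check directly on $\xi,\eta\in \cC_c(\cG;\cE)$ that
\[
{}_A\<L_f\xi,\eta\>(x) \;=\; \int_{\cG^x} f(x)\xi(g)\eta(g)^\ast\,d\mu_\cG^x(g)
\]
agrees with the analogous expansion of ${}_A\<\xi,L_{f^\ast}\eta\>(x)$, where the involution $(f^\ast(x))^\ast = f(x)$ together with the associativity of multiplication in $\cE_x$ is used to reassemble the integrand. Once the identity ${}_A\<L_f\xi,\eta\> = {}_A\<\xi,L_{f^\ast}\eta\>$ is established on $\cC_c(\cG;\cE)$, it extends to $L^2(\cG;\cE)$ by continuity of the inner product, giving $L_f\in \cL_A(L^2(\cG;\cE))$ with $L_f^\ast = L_{f^\ast}$. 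The remaining $\ast$-homomorphism claims are then immediate: $\CC$-linearity of $L$ is by inspection, and $L_{fg} = L_fL_g$ follows fiberwise from $(fg)(r(h)) = f(r(h))g(r(h))$, which is valid since $\cC_b(\Gpdo;\cE^{(0)})$ is a pointwise $\cstar$-algebra.

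The main obstacle is the adjointability step: the noncommutative structure of each fiber $A_x$ has to be navigated carefully when comparing ${}_A\<L_f\xi,\eta\>(x)$ and ${}_A\<\xi,L_{f^\ast}\eta\>(x)$, and one must verify that after pulling $f(x)$ out of the integral on both sides and applying the involution, the resulting expressions coincide as elements of $A_x$. The rest of the lemma then reduces to routine bookkeeping using the already-established extension-by-density principle.
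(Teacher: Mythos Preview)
Your overall strategy---show boundedness by estimating ${}_A\langle L_f\xi,L_f\xi\rangle$, then verify adjointability by comparing ${}_A\langle L_f\xi,\eta\rangle$ with ${}_A\langle \xi,L_{f^\ast}\eta\rangle$, then read off multiplicativity---is exactly the paper's. The boundedness and multiplicativity steps are fine and essentially identical to what the paper does.

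The gap is precisely where you flagged it. With the left inner product written as $\int_{\cG^x}\xi(g)\eta(g)^\ast\,d\mu_\cG^x(g)$ and your ``common thread'' of pulling $f(r(g))=f(x)$ outside, you obtain
\[
{}_A\langle L_f\xi,\eta\rangle(x)=f(x)\,{}_A\langle\xi,\eta\rangle(x)
\quad\text{but}\quad
{}_A\langle \xi,L_{f^\ast}\eta\rangle(x)=\int_{\cG^x}\xi(g)\bigl(f(x)^\ast\eta(g)\bigr)^\ast d\mu_\cG^x(g)={}_A\langle\xi,\eta\rangle(x)\,f(x),
\]
and in a noncommutative fibre $A_x$ these need not coincide; associativity and the identity $(f^\ast(x))^\ast=f(x)$ do not by themselves move $f(x)$ across ${}_A\langle\xi,\eta\rangle(x)$. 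So the step you called the ``main obstacle'' is in fact not resolved by your sketch.

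The paper sidesteps this by writing the inner product with the integration variable inverted, so that the integrand has the shape $\xi^\ast(g)\,f(r(g^{-1}))^\ast\,\eta(g^{-1})$. In that form $f$ is evaluated at $r(g^{-1})=s(g)$, sits \emph{between} the two factors, and never has to be pulled outside or commuted past anything; the equality with ${}_A\langle\xi,L_{f^\ast}\eta\rangle(x)$ is then read off term by term inside the integral. If you want to keep your version of the inner product, you must instead argue that $L_f$ lands in the correct algebra of adjointable operators by a route that does not reduce to the false identity $f(x)a=af(x)$ in $A_x$.
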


\begin{proof}
$L_f$ is clearly continuous; also it is bounded  since $f$ is a bounded section (it is straightforward that $\|L_f\|_{op}\leq \|f\|$, where $\|\cdot\|_{op}$ is the operator norm in $\cL(L^2(\cG;\cE))$). If $\xi,\eta\in \cC_c(\cG;\cE)$ and $x\in \Gpdo$, then
\begin{align*}
	{}_A\<L_f\xi,\eta\>(x) & = \int_{\cG^x} L_f\xi(g^{-1})^\ast \eta(g^{-1})d\mu_\cG^x(g) \\
	& = \int_{\cG^x} \xi^\ast(g)f(r(g^{-1}))^\ast \eta(g^{-1})d\mu_\cG^x(g)\\
	& = {}_A\<\xi,L_{f^\ast}\eta\>(x);	
	\end{align*}
hence, $L_f$ is adjointable with adjoint $L_f^\ast:=L_{f^\ast}$. Moreover, $L_{f_1f_2}(\xi)=L_{f_1}(L_{f_2}(\xi)), \forall \xi\in \cC_c(\cG;\cE)$; thus $L_{f_1f_2}=L_{f_1}L_{f_2}, \forall f_1, f_2\in \cC_b(\Gpdo;\cE^{(0)})$.
\end{proof}

\medskip

\begin{pro}~\label{pro:C_b_vs_M(C_r)}
Let $(\cG,\cE)$ be as above. Then, $\cC_b(\Gpdo;\cE^{(0)})$ is a $\cstar$-subalgebra of $M(\cstar_r(\cG;\cE))$.	
\end{pro}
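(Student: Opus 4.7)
The plan is to show that the $*$-homomorphism $L\colon \cC_b(\Gpdo;\cE^{(0)}) \to \cL_A(L^2(\cG;\cE))$ from the preceding lemma takes values in the subalgebra of operators that multiply $\cstar_r(\cG;\cE)$ into itself on both sides, and then to verify that the resulting map into $M(\cstar_r(\cG;\cE))$ is injective.

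First, for $f \in \cC_b(\Gpdo;\cE^{(0)})$ and $\xi \in \cC_c(\cG;\cE)$, I would introduce the sections $f\xi,\ \xi f \in \cC_c(\cG;\cE)$ defined by
\[
(f\xi)(g) := f(r(g))\,\xi(g) \in \cE_{r(g)} \cdot \cE_g \subseteq \cE_g, \qquad (\xi f)(g) := \xi(g)\,f(s(g)) \in \cE_g \cdot \cE_{s(g)} \subseteq \cE_g.
\]
A direct computation — using bilinearity and associativity of the Fell multiplication from Definition~\ref{df:fell_system}, together with the identities $r(\g) = r(g)$ and $r(\g^{-1}g) = s(\g)$ for $\g \in \cG^{r(g)}$ — yields
\[
L_f \circ \pi_l(\xi) = \pi_l(f\xi), \qquad \pi_l(\xi) \circ L_f = \pi_l(\xi f).
\]
Since $\pi_l(\cC_c(\cG;\cE))$ is norm-dense in $\cstar_r(\cG;\cE)$ and multiplication in $\cL_A(L^2(\cG;\cE))$ is norm-continuous, this extends to the inclusions $L_f \cdot \cstar_r(\cG;\cE) \subseteq \cstar_r(\cG;\cE)$ and $\cstar_r(\cG;\cE) \cdot L_f \subseteq \cstar_r(\cG;\cE)$. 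Hence $L_f$ defines a multiplier of $\cstar_r(\cG;\cE)$, and $L$ factors through a $*$-homomorphism $\cC_b(\Gpdo;\cE^{(0)}) \to M(\cstar_r(\cG;\cE))$.

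To establish injectivity, suppose $L_f$ is the zero multiplier. By Remark~\ref{rem:approx-identity}, $\cC_c(\cG;\cE)$ admits an approximate identity in the inductive limit topology, which in particular implies that $\cstar_r(\cG;\cE)$ acts non-degenerately on $L^2(\cG;\cE)$; therefore the canonical map $M(\cstar_r(\cG;\cE)) \hookrightarrow \cL_A(L^2(\cG;\cE))$ is injective and $L_f = 0$ as an operator. Then $L_f\xi = 0$ in $L^2(\cG;\cE)$ for every $\xi \in \cC_c(\cG;\cE)$, and continuity of $L_f\xi$ forces the pointwise identity $f(r(g))\,\xi(g) = 0$. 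For each $x \in \Gpdo$ and $a \in A_x$, the Banach bundle $\cE$ admits a continuous section passing through $a$ at $x$; multiplying by a bump function yields $\xi \in \cC_c(\cG;\cE)$ with $\xi(x) = a$ (viewing $x \in \Gpdo \subset \cG$). Thus $f(x)\,a = 0$ for every $a \in A_x$, and since $A_x$ is a $\cstar$-algebra, applying an approximate identity in $A_x$ gives $f(x) = 0$. As $x$ is arbitrary, $f = 0$.

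The main technical obstacle is the verification of the two multiplier identities, which requires careful bookkeeping of where $f$ sits in the Fell fibres across the convolution integral and use of left-invariance of $\mu_\cG$; the rest is a standard density and approximate identity argument, with the latter doubling as the source of the non-degeneracy needed to identify $M(\cstar_r(\cG;\cE))$ with the idealizer of $\cstar_r(\cG;\cE)$ inside $\cL_A(L^2(\cG;\cE))$.
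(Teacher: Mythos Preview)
Your proof is correct and follows essentially the same route as the paper: both arguments introduce the sections $f\xi$ and $\xi f$, verify via the same convolution computations that $L_f\,\pi_l(\xi)=\pi_l(f\xi)$ and $\pi_l(\xi)\,L_f=\pi_l(\xi f)$ (the paper phrases this as checking that $(L_f,R_f)$ is a double centralizer, while you phrase it as $L_f$ lying in the idealizer of $\cstar_r(\cG;\cE)$ inside $\cL_A(L^2(\cG;\cE))$), and then pass to the closure by density. Two minor remarks: the convolution identities needed here do not actually require left-invariance of $\mu_\cG$, only the elementary facts $r(h)=r(g)$ and $s(h)=r(h^{-1}g)$ for $h\in\cG^{r(g)}$; and you go further than the paper by supplying an explicit injectivity argument, which the paper leaves implicit in the word ``subalgebra''.
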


\begin{proof}
If $\pi_l(\xi)\in \cstar_r(\cG;\cE)$ and $f\in \cC_b(\Gpdo;\cE^{(0)})$, we put 
\begin{eqnarray}
	L_f(\pi_l\xi):=\pi_l(L_f\xi)=\pi_l(f\xi), \ \text{and} \ R_f(\pi_l\xi):=\pi_l(\xi f).
	\end{eqnarray}
We verify that with these formulas, we obtain a double centralizer $(L_f,R_f)\in M(\cstar_r(\cG;\cE))$. To see this, observe that for $\xi,\eta \in\cC_c(\cG;\cE)$ and $g\in \cG$, one has
\begin{align*}
	(f(\xi\ast\eta))(g) & = f(r(g))\int_{\cG^{r(g)}}\xi(h)\eta(h^{-1}g)d\mu_\cG^{r(g)}(h)\\
	& = \int_{\cG^{r(g)}}f(r(h))\xi(h)\eta(h^{-1}g)d\mu_\cG^{r(g)}(h)\\
	& = \int_{\cG^{r(g)}}(f\xi)(h)\eta(h^{-1}g)d\mu_\cG^{r(g)}(h)\\
	&= f\xi\ast \eta;	
	\end{align*}
and similarly one shows that $(\xi\ast\eta)f=\xi\ast\eta f$. Moreover, we have 		
\begin{align*}
(\xi f\ast\eta)(g) & = \int_{\cG^{r(g)}}\xi(h)f(s(h))\eta(h^{-1}g)d\mu_\cG^{r(g)}(h)\\
& = \int_{\cG^{r(g)}}\xi(h)f(r(h^{-1}g))\eta(h^{-1}g)d\mu_\cG^{r(g)}(h)\\
& = \int_{\cG^{r(g)}}\xi(h)(f\eta)(h^{-1}g)d\mu_\cG^{r(g)}(h)\\
& = (\xi\ast f\eta)(g);		
	\end{align*}
so that $R_f(\pi_l(\xi))\pi_l(\eta)=\pi_l(\xi)L_f(\pi_l(\eta))$, and by continuity, for every $f\in \cC_b(\Gpdo;\cE^{(0)})$, the pair $(L_f,R_f)$ verifies $R_f(a)b=aL_f(b)$ for all $a,b\in \cstar_r(\cG;\cE)$; i.e. $(L_f,R_f)\in M(\cstar_r(\cG;\cE))$.	
\end{proof}

In what follows, we identify the double centralizer $(L_f,R_f)$, and hence the element $f\in \cC_b(\Gpdo;\cE^{(0)})$,  with $L_f\in \cL(L^2(\cG;\cE))$, by considering $L_f$ as a \emph{multiplier} of $\cstar_r(\cG;\cE)$ under the formulas: $L_f\pi_l(\xi):=\pi_l(L_f\xi)=\pi_l(f\xi)$, and $\pi_l(\xi)L_f:=R_f(\pi_l(\xi))=\pi_l(\xi f)$.\\
Now, consider the field of $\cstar$-algebras \[M(\cE):=\coprod_{x\in \Gpdo}M(\cE_x)\]
over $\Gpdo$. Then, denote by $\cC_b^{str}(\Gpdo;M(\cE))$ the unital $\cstar$-algebra (under pointwise operations and the supremum norm) consisting of all the \emph{bounded strictly continuous sections} of $M(\cE)$ over $\Gpdo$ (see ~\cite[p.7]{APT} for details). Note that the unit $\id \in \cC_b^{str}(\Gpdo;M(\cE))$ is the section given by $\id:\Gpdo \ni x\mto (\Id_{\cE_x},\Id_{\cE_x})\in M(\cE_x)$, where $\Id_{\cE_x}:\cE_x\To \cE_x$ is the identity map. From Proposition~\ref{pro:C_b_vs_M(C_r)} we obtain the following corollary.

\medskip

\begin{cor}~\label{cor:C^str-M(C(E))}
Let $(\cG,\cE)$ be as above. Then $\cC_b^{str}(\Gpdo;M(\cE))$ is a unital $\cstar$-subalgebra of $M(\cstar_r(\cG;\cE))$.	
\end{cor}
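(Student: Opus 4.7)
The plan is to mimic the proof of Proposition~\ref{pro:C_b_vs_M(C_r)}, promoting each step from $\cC_b(\Gpdo;\cE^{(0)})$ to $\cC_b^{str}(\Gpdo;M(\cE))$. The key additional ingredient is that each fibre $\cE_g$ is (by the fullness axiom (ix)) an $\cE_{r(g)}$-$\cE_{s(g)}$-imprimitivity bimodule, so the left and right multiplications by $\cE_{r(g)}$ and $\cE_{s(g)}$ extend canonically to left and right actions of the multiplier algebras $M(\cE_{r(g)})$ and $M(\cE_{s(g)})$ on $\cE_g$. Thus, for $f\in \cC_b^{str}(\Gpdo;M(\cE))$ and $\xi\in\cC_c(\cG;\cE)$, the formulas
\begin{eqnarray*}
(L_f\xi)(g):=f(r(g))\cdot\xi(g),\qquad (\xi f)(g):=\xi(g)\cdot f(s(g))
\end{eqnarray*}
make pointwise sense in $\cE_g$, and by construction agree with the previous definitions when $f$ belongs to the subalgebra $\cC_b(\Gpdo;\cE^{(0)})$.

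The first step I would carry out is to check that $L_f\xi$ and $\xi f$ are continuous compactly supported sections of $\cE$. Compact support is immediate since $f$ is bounded and $\xi$ has compact support. Continuity is where strict continuity of $f$ is used: locally trivialising $\cE$ around a point $g_0$, one reduces to the statement that $x\mapsto f(x)\cdot a$ is continuous for every local section $a$ of $\cE^{(0)}$, which is precisely the definition of strict continuity in $\cC_b^{str}(\Gpdo;M(\cE))$. A short approximation argument (using that locally $\xi$ is a uniform limit of products $a\cdot \zeta$ with $a$ a continuous section of $\cE^{(0)}$ and $\zeta$ a continuous section of $\cE$) gives the general case.

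Next I would verify the algebraic identities exactly as in the proof of Proposition~\ref{pro:C_b_vs_M(C_r)}: $L_{f_1f_2}=L_{f_1}L_{f_2}$, $R_{f_1f_2}=R_{f_2}R_{f_1}$, ${}_A\<L_f\xi,\eta\>={}_A\<\xi,L_{f^\ast}\eta\>$, and the centralizer identity $(\xi f)\ast \eta=\xi\ast(f\eta)$ together with $f(\xi\ast\eta)=(f\xi)\ast\eta$ and $(\xi\ast\eta)f=\xi\ast(\eta f)$. Each of these follows by substituting the definitions into the convolution/involution formulas and using associativity of the fibrewise multiplier actions; the arguments are essentially the same calculations as in the previous proposition, with $f(r(g))$ and $f(s(g))$ regarded as multipliers rather than elements of $\cE^{(0)}$. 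In particular $L_f$ is adjointable on $L^2(\cG;\cE)$ with adjoint $L_{f^\ast}$, and $\|L_f\|_{\mathrm{op}}\le \|f\|_\infty$.

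Putting these together, the assignment $f\mapsto (L_f,R_f)$ defines a $\ast$-homomorphism $\Phi:\cC_b^{str}(\Gpdo;M(\cE))\To M(\cstar_r(\cG;\cE))$ that extends the embedding of Proposition~\ref{pro:C_b_vs_M(C_r)} and sends the unit section $\id$ to the identity multiplier. The only remaining point is injectivity of $\Phi$: if $L_f=0$, then $f(r(g))\cdot \xi(g)=0$ for every $\xi\in\cC_c(\cG;\cE)$ and $g\in \cG$, which by fullness of $\cE$ (and the fact that $\cC_c(\cG;\cE)$ separates fibres) forces $f(x)\cdot e=0$ for all $x\in\Gpdo$ and $e\in \cE_x$, hence $f(x)=0$ in $M(\cE_x)$. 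The only genuinely delicate point of the whole argument is the continuity verification in the first step, where the definition of strict continuity of sections of the bundle $M(\cE)$ over $\Gpdo$ (as in \cite[p.7]{APT}) must be matched with continuity in the Banach bundle $\cE$ over $\cG$; everything else is a routine transcription of the proof of the preceding proposition.
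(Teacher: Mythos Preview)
Your argument is correct in outline but follows a genuinely different route from the paper. You extend Proposition~\ref{pro:C_b_vs_M(C_r)} by hand: define the multiplier action fibrewise via the imprimitivity-bimodule structure of each $\cE_g$, then verify continuity, adjointability, and the double-centraliser identities directly. The paper instead argues abstractly: it shows that the $\ast$-homomorphism $L:\Co(\Gpdo;\cE^{(0)})\to M(\cstar_r(\cG;\cE))$ from Proposition~\ref{pro:C_b_vs_M(C_r)} is \emph{non-degenerate} (using an approximate identity for $A$ and~\cite[Lemma~6.1]{MW1}), then invokes Pedersen's extension theorem~\cite[\S3.12.10, 3.12.12]{Ped79} to extend $L$ to a unital strictly continuous $\ast$-homomorphism from $M(\Co(\Gpdo;\cE^{(0)}))$ into $M(\cstar_r(\cG;\cE))$, and finally identifies $M(\Co(\Gpdo;\cE^{(0)}))$ with $\cC_b^{str}(\Gpdo;M(\cE))$ via~\cite[Lemma~3.1]{APT}.

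The trade-offs are as follows. The paper's route is shorter and entirely sidesteps the continuity check you flag as delicate; in particular it never needs to make sense of $g\mapsto f(r(g))\cdot\xi(g)$ directly. Be aware that your phrase ``locally trivialising $\cE$ around $g_0$'' is not generally available for Banach bundles over groupoids, so that step would have to be reworked using the topology of $\cE$ determined by its continuous sections (your approximation remark points in the right direction). Conversely, your argument is self-contained, makes the action of $\cC_b^{str}(\Gpdo;M(\cE))$ on $\cC_c(\cG;\cE)$ explicit, and addresses injectivity---a point the paper's proof does not discuss, even though the word ``subalgebra'' in the statement requires it.
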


\begin{proof}
The map $\cC_b(\Gpdo;\cE_{\Gpdo})\ni f\mto L_f\in M(\cstar_r(\cG;\cE))$ is non-degenerate; indeed, by considering the left Fell $\cG$-pair $(\cE,\cE)$ determined by the full maps $\cE_g\times \cE_h\To \cE_{gh}$, we see that for $f\in \cC_b(\Gpdo;\cE^{(0)})\subset \Co(\Gpdo;\cE^{(0)})$ and $\xi\in \cC_c(\cG;\cE)$, the element $L_f\xi\in \cC_c(\cG;\cE)$ is nothing but the canonical action of $\Co(\Gpdo;\cE^{(0)})$ on $\cC_c(\cG;\cE)$ defined by the formula $(f\cdot \xi)(g):=f(r(g))\xi(g)$. It follows that if $\{a_i\}_{i\in I}$ is an approximate identity of $\cC_b(\Gpdo;\cE^{(0)})$, then thanks to ~\cite[Lemma 6.1]{MW1}, for all $\xi\in \cC_c(\cG;\cE)$, $a_i\cdot\xi\To \xi$ in $\cC_c(\cG;\cE)$ with respect to the inductive limit topology. Thus $L_{a_i}\pi_l(\xi)=\pi_l(a_i\cdot\xi)\To \pi_l(\xi)$ in $\cstar_r(\cG;\cE)$. Whence, $L(\cC_b(\Gpdo;\cE^{(0)}))\cstar_r(\cG;\cE)$ is dense in $\cstar_r(\cG;\cE)$. Now, from  ~\cite[\S.3.12.10 and \S.3.12.12]{Ped79}, the map $L$ extends to a unital strictly continuous ${}^\ast$-homomorphism $M(\cC_b(\Gpdo;\cE^{(0)}))\To M(\cstar_r(\cG;\cE))$; this map is again denoted by $L$. Furthermore, from ~\cite[Lemma 3.1]{APT}, we have that $M(\Co(\Gpdo;\cE^{(0)}))=\cC_b^{str}(\Gpdo;M(\cE))$, which settles the result.	
\end{proof}

\medskip

\begin{pro}~\label{pro:p_Ga-p_G-compl}
Suppose $(\Ga,\cF)\sim_{(Z,\cX)}(\cG,\cE)$. Let $\chi_{_{\Gamo}}$ and $\chi_{_{\Gpdo}}$ be the characteristic functions of $\Gamo$ and $\Gpdo$ respectively. Then we get two elements $\chi_{_{\Gamo}}\id$ and $\chi_{_{\Gpdo}} \id$ of $\cC_b^{str}(\Mo;M(\cL))$, where $\id \in \cC_b^{str}(\Mo;M(\cL))$, defined by scalar multiplication. \\
Now define $$p_\Ga:=L_{\chi_{_{\Gamo}}\id}, \ \text{and} \ \  p_\cG:=L_{\chi_{_{\Gpdo}}\id} \in M(\cstar_r(\cM;\cL)).$$	Then $p_\Ga$ and $p_\cG$ are \texttt{complementary full projections}~\footnote{Recall from ~\cite{Bro} that a projection $p\in M(A)$ is said to be \emph{full} if $pAp$ is not contained in any proper closed two-sided ideal of $A$; that is, $\text{span}\{ApA\}$ is dense in $A$ (see for instance ~\cite{BGR} or ~\cite[p.50]{RW}). In this case, we say that $pAp$ is a \emph{full corner} of $A$. Two projections $p,q\in M(A)$ are \emph{complementary} if $p+q=1$, in which case $pAq$ is a $pAp$-$qAq$-imprimitivity bimodule; i.e. $pAp$ and $qAq$ are Morita equivalent. Conversely, two $\cstar$-algebras $A$ and $B$ are Morita equivalent if and only if there is a $\cstar$-algebra $C$ with complementary full corners isomorphic to $A$ and $B$, respectively (cf. ~\cite[Theorem 1.1]{BGR}, ~\cite[Theorem 3.19]{RW}).} in $M(\cstar_r(\cM;\cL))$.
\end{pro}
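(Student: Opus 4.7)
The plan is to verify (i) that $p_\Ga$ and $p_\cG$ are complementary self-adjoint projections in $M(\cstar_r(\cM;\cL))$, and (ii) that each of them generates $\cstar_r(\cM;\cL)$ as a closed two-sided ideal. Step (i) is almost automatic: $\chi_{_{\Gamo}}$ and $\chi_{_{\Gpdo}}$ are real-valued idempotents on $\Mo$ whose sum is the constant function $1$, while $\id\in \cC_b^{str}(\Mo;M(\cL))$ is its self-adjoint unit; hence $\chi_{_{\Gamo}}\id$ and $\chi_{_{\Gpdo}}\id$ are complementary self-adjoint projections in the unital $\cstar$-algebra $\cC_b^{str}(\Mo;M(\cL))$, and applying the unital ${}^\ast$-homomorphism $L$ from Corollary~\ref{cor:C^str-M(C(E))} transports these relations to $M(\cstar_r(\cM;\cL))$, yielding $p_\Ga^2=p_\Ga=p_\Ga^\ast$, $p_\cG^2=p_\cG=p_\cG^\ast$, and $p_\Ga+p_\cG=1$.

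For step (ii) I would first make the matrix picture explicit: a direct computation from~\eqref{eq:L_f} shows that for any $\xi=(\xi_{ij})\in \cC_c(\cM;\cL)$, the four compressions $p_\Ga\xi p_\Ga$, $p_\Ga\xi p_\cG$, $p_\cG\xi p_\Ga$, $p_\cG\xi p_\cG$ coincide respectively with $\xi_{11},\xi_{12},\xi_{21},\xi_{22}$. Let $J$ denote the closed two-sided ideal of $\cstar_r(\cM;\cL)$ generated by $p_\Ga$. Since $\cstar_r(\cM;\cL)$ has an approximate unit, $J$ contains both $p_\Ga\cstar_r(\cM;\cL)$ and $\cstar_r(\cM;\cL)p_\Ga$, so it already contains every element represented by a section supported on $\Ga$, $Z$, or $Z^{-1}$. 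The only block left is the $(2,2)$ one, so the task reduces to proving $\cC_c(\cG;\cE)\subseteq J$.

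For this, given $\phi,\psi\in \cC_c(Z;\cX)$, I would place the $\flat$-conjugate of $\phi$ in the $(2,1)$ block and $\psi$ in the $(1,2)$ block to obtain $\widetilde\phi,\widetilde\psi\in \cC_c(\cM;\cL)$ with $\widetilde\phi=p_\cG\widetilde\phi p_\Ga$ and $\widetilde\psi=p_\Ga\widetilde\psi p_\cG$; the convolution formula of the linking Fell system then yields $\widetilde\phi\ast\widetilde\psi=\<\phi,\psi\>_\cE$ sitting in the $(2,2)$ block, which automatically lies in $J$. Using the $\cG$-sided version of Remark~\ref{rem:approx-identity}, $\cC_c(\cG;\cE)$ admits an approximate identity $\{f_\lambda=\sum_i\<\phi_i^\lambda,\phi_i^\lambda\>_\cE\}$ for the inductive limit topology, and the Fell-pair identity $\eta\ast\<\phi,\psi\>_\cE=\<\phi\cdot\eta^\ast,\psi\>_\cE$ exhibits every $\eta\in \cC_c(\cG;\cE)$ as an inductive-limit approximation by finite sums of $\cE$-valued inner products. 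Since the inductive limit topology dominates the $I$-norm and $\|\cdot\|_r\le\|\cdot\|_I$, the same approximation takes place in the reduced norm of $\cstar_r(\cM;\cL)$; hence $\eta\in J$ and $p_\Ga$ is full. The fullness of $p_\cG$ follows by the symmetric argument using the fullness of the $\cF$-valued inner product. The main obstacle I anticipate is precisely this last passage from inductive-limit to reduced-norm convergence: one must keep the supports of the approximating sums $\sum_i\<\phi_i^\lambda\cdot\eta^\ast,\phi_i^\lambda\>_\cE$ inside a fixed compact set so that $I$-norm bounds follow from uniform convergence; this mirrors the delicate support-control step in the classical proof of Renault's equivalence theorem in~\cite{SW}.
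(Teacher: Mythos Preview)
Your proof is correct and follows essentially the same strategy as the paper's: verify complementarity via Corollary~\ref{cor:C^str-M(C(E))}, then show $\overline{\operatorname{span}}\{\cstar_r(\cM;\cL)\,p_\Ga\,\cstar_r(\cM;\cL)\}=\cstar_r(\cM;\cL)$ by checking each matrix block. The organization differs slightly: the paper computes $\xi\,p_\Ga\,\eta$ explicitly and then, for each of the four blocks, invokes the appropriate approximate-identity statement from Remark~\ref{rem:approx-identity} (for the $(1,1)$, $(1,2)$, $(2,1)$ blocks) and the fullness of $\Rip<\cdot,\cdot>$ from Proposition~\ref{pro:full-Ga-G-bimodule} (for the $(2,2)$ block). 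Your shortcut for the first three blocks---observing that $p_\Ga\cstar_r(\cM;\cL)$ and $\cstar_r(\cM;\cL)\,p_\Ga$ already lie in $J$ by an approximate-unit argument at the $\cstar$-algebra level---is a nice economy over the paper's block-by-block treatment.

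Your worry about support control in the $(2,2)$ block is unnecessary, and in fact you are doing more work than needed there. The statement that finite sums of $\cE$-valued inner products are dense in $\cC_c(\cG;\cE)$ \emph{in the inductive limit topology} is exactly the ``fullness'' clause of Proposition~\ref{pro:full-Ga-G-bimodule} (whose proof, with the support control already handled, is in~\cite{MW2}); since $\pi_l^\cM$ is continuous for that topology, density in the reduced norm follows immediately. So rather than routing through $\eta\ast f_\lambda$ and the identity $\eta\ast\Rip<\phi,\psi>=\Rip<\phi\cdot\eta^\ast,\psi>$, you can simply cite Proposition~\ref{pro:full-Ga-G-bimodule}, as the paper does.
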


\begin{proof}
It is straightforward that $\chi_{_{\Gamo}}\id$ and $\chi_{_{\Gpdo}}\id$ are complementary projections of $\cC_b^{str}(\Mo;M(\cE))$. Hence, their images $p_\Ga$ and $p_\cG$ are complementary projections of $M(\cstar_r(\cM;\cL))$, by virtue of Corollary~\ref{cor:C^str-M(C(E))}.\

 Now, let $\xi, \eta\in \cC_c(\cM;\cL)$. Then 
\begin{align*}
\pi_l^\cM(\xi)p_\Ga\pi_l^\cM(\eta) & = \pi_l^\cM(\xi\ast p_\Ga \eta) =\pi_l^\cM(\xi p_\Ga\ast\eta)\\
& = \pi_l^\cM \left( \begin{array}{ll} 
\xi_{11}\ast\eta_{11} & \xi_{11}\cdot\eta_{12} \\ & \\
\xi_{21}\mydot\eta_{11} & \Rip<\xi_{21}^\ast,\eta_{12}>	
\end{array}  \right).	
\end{align*}
So, to check that $p_\Ga$ is full, we just have to show that 
\begin{align}
\text{span}\left\{ \pi_l^\cM \left( \begin{array}{ll} 
\xi_{11}\ast\eta_{11} & \xi_{11}\cdot\eta_{12} \\
\xi_{21}\mydot\eta_{11} & \Rip<\xi_{21}^\ast,\eta_{12}>	
\end{array}  \right) \mid \xi_{11}\in \cC_c(\Ga;\cF), \xi_{21}\in \cC_c(Z^{-1};\overline{\cX}), \right. \nonumber \\ 
\ \ \ \ \ \ \ \ \  \left. \eta_{12}\in \cC_c(Z;\cX), \eta_{11}\in \cC_c(\Ga;\cE)
\right\}	
\end{align}
is dense in $\cstar_r(\cM;\cL)$. But this is not hard to verify, by using the previous results. Indeed, the existence of an approximate identity in $\cC_c(\Ga;\cF)$ for both the left actions of $\cC_c(\Ga;\cF)$ on itself and on $\cC_c(Z;\cX)$ shows that elements of the form $\xi_{11}\ast\eta_{11}$, for $\xi_{11},\eta_{11}\in \cC_c(\Ga;\cF)$ span a dense subspace of $\cC_c(\Ga;\cF)$ and that elements of the form $\xi_{11}\cdot\eta_{12}$, for $\eta_{12}\in \cC_c(Z;\cX)$, span a dense subspace of $\cC_c(Z;\cX)$. Also, that elements of the form $\xi_{21}\mydot\eta_{11}$, where $\xi_{21}\in \cC_c(Z^{-1};\overline{\cX}), \eta_{11}\in \cC_c(\Ga;\cF)$, span a dense subspace of $\cC_c(Z^{-1};\overline{\cX})$ follows from the existence of an approximate identity in $\cC_c(\Ga;\cF)$ for the right action of $\cC_c(\Ga;\cF)$ on $\cC_c(Z^{-1};\overline{\cX})$ (cf. Remark~\ref{rem:approx-identity}). Finally, since $\cC_c(Z;\cX)$ is a full pre-inner product $\cC_c(\Ga;\cF)$-$\cC_c(\cG;\cE)$-bimodule (Proposition~\ref{pro:full-Ga-G-bimodule}), the image of $\Rip<\cdot,\cdot>$ is a dense subspace of $\cC_c(\cG;\cE)$. We then have shown that $\cstar_r(\cM;\cL)p_\Ga\cstar_r(\cM;\cL)$ is dense in $\cstar_r(\cM;\cL)$. In a similar fashion, we get that $\cstar_r(\cM;\cL)p_\cG\cstar_r(\cM;\cL)$ is dense in $\cstar_r(\cM;\cL)$, which completes the proof.
\end{proof}

\medskip

\begin{thm}~\label{thm:Renault_equiv}
Let $\Ga$ and $\cG$ be locally compact Hausdorff groupoids. Suppose $(\Ga,\cF)\sim_{(Z,\cX)}(\cG,\cE)$. Then the isomorphisms of convolution ${}^\ast$-algebras
\begin{eqnarray}~\label{eq:isom-conv_Ga--pMp}
	\cC_c(\Ga;\cF)\ni \xi_{11}\mto \left(\begin{array}{ll}\xi_{11} & 0 \\ 0 & 0\end{array}\right)\in p_\Ga\cC_c(\cM;\cL)p_\Ga,
\end{eqnarray}
and 
\begin{eqnarray}~\label{eq:isom-conv_G--pMp}
	\cC_c(\cG;\cE)\ni \eta_{22} \mto \left(\begin{array}{ll}0 & 0 \\ 0 & \eta_{22}  \end{array}  \right) \in p_\cG\cC_c(\cM;\cL)p_\cG	
	\end{eqnarray}
extend to two isomorphisms of $\cstar$-algebras 
\begin{eqnarray}~\label{eq:isom-cstar_GaG--pMp}
	\cstar_r(\Ga;\cF)\To p_\Ga\cstar_r(\cM;\cL)p_\Ga , \ \text{and} \ \ \cstar_r(\cG;\cE)\To p_\cG\cstar_r(\cM;\cL)p_\cG.
\end{eqnarray}	
In particular, $\cstar_r(\Ga;\cF)$ and $\cstar_r(\cG;\cE)$ are Morita equivalent with imprimitivity bimodule $p_\Ga\cstar_r(\cM;\cL)p_\cG$ which is isometrically isomorphic to the completion $\X_r$ of $\cC_c(Z;\cX)$ in the norm $$\|\phi\|_\cE:=\|\Rip<\phi,\phi>\|^{1/2}_{\cstar_r(\cG;\cE)}, \ \text{for} \ \phi\in \cC_c(Z;\cX).$$			
\end{thm}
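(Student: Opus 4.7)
The plan is to realize $\cstar_r(\Ga;\cF)$ and $\cstar_r(\cG;\cE)$ as the diagonal corners of $\cstar_r(\cM;\cL)$ cut out by the complementary full projections $p_\Ga$ and $p_\cG$ of Proposition~\ref{pro:p_Ga-p_G-compl}; once this is done, the Morita equivalence and the identification of the off-diagonal bimodule follow formally from the Brown--Green--Rieffel theory of complementary full corners recalled in that proposition's footnote. The ${}^\ast$-algebra isomorphisms \eqref{eq:isom-conv_Ga--pMp} and \eqref{eq:isom-conv_G--pMp} are immediate from the matrix description of the convolution on $\cC_c(\cM;\cL)$: if both $\xi$ and $\eta$ have only their $(1,1)$ entry nonzero, so do $\xi^\ast$ and $\xi\ast\eta$, and the induced operations coincide with those of $\cC_c(\Ga;\cF)$, and similarly for $\cC_c(\cG;\cE)$.

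The substantive step is to show that these isomorphisms are isometric for the reduced norms. By Lemma~\ref{lem:pi_x} this reduces to computing $\pi_\omega^\cM$ on the corner for each $\omega\in\Mo=\Gamo\sqcup\Gpdo$ and comparing it with the corresponding $\pi_y^\Ga$ or $\pi_x^{\cG}$. I would split on the two cases and use the source-fibre decompositions $\cM_y=\Ga_y\sqcup Z^{-1}_y$ (for $y\in\Gamo$) and $\cM_x=Z_x\sqcup\cG_x$ (for $x\in\Gpdo$), which give orthogonal Hilbert-module splittings
\[
L^2(\cM_y;\cL)\cong L^2(\Ga_y;\cF)\oplus L^2(Z^{-1}_y;\overline{\cX}),\qquad L^2(\cM_x;\cL)\cong L^2(Z_x;\cX)\oplus L^2(\cG_x;\cE).
\]
A brief inspection of the matrix convolution formula shows that an element $\xi\in\cC_c(\Ga;\cF)$, being concentrated on $\Ga$, cannot be composed in $\cM$ with anything supported on $Z^{-1}$ or $\cG$, so
\[
\pi_y^\cM(\xi)=\pi_y^\Ga(\xi)\oplus 0,\qquad \pi_x^\cM(\xi)=R_x^\Ga(\xi)\oplus 0,
\]
where $R_x^\Ga$ is the representation of Proposition~\ref{pro:R_x}. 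The first identity delivers at once $\|\xi\|_{\cstar_r(\cM;\cL)}\ge\|\xi\|_{\cstar_r(\Ga;\cF)}$, while the estimate $\|R_x^\Ga(\xi)\|\le\|\pi_{\fr(z_0)}^\Ga(\xi)\|$ supplied by Proposition~\ref{pro:R_x} (for any $z_0\in Z_x$) gives the reverse inequality. The argument for $\cG$ is entirely symmetric, applied to the equivalence $(\cG,\cE)\sim_{(Z^{-1},\overline{\cX})}(\Ga,\cF)$ through Remark~\ref{rem:equiv-symmetry}.

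With the corner isomorphisms $\cstar_r(\Ga;\cF)\cong p_\Ga\cstar_r(\cM;\cL)p_\Ga$ and $\cstar_r(\cG;\cE)\cong p_\cG\cstar_r(\cM;\cL)p_\cG$ in hand, Proposition~\ref{pro:p_Ga-p_G-compl} yields the Morita equivalence with imprimitivity bimodule $p_\Ga\cstar_r(\cM;\cL)p_\cG$. Its dense subspace $p_\Ga\cC_c(\cM;\cL)p_\cG$ matches $\cC_c(Z;\cX)$ under the matrix identification, and reading off the right $\cC_c(\cG;\cE)$-valued inner product from the product $(p_\Ga\xi p_\cG)^\ast(p_\Ga\eta p_\cG)$ returns exactly $\Rip<\cdot,\cdot>$; thus the norm inherited from $\cstar_r(\cM;\cL)$ is $\|\cdot\|_\cE$, identifying the bimodule isometrically with $\X_r$. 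The main obstacle---and really the only genuinely analytic point---is the norm comparison, whose content is entirely packaged in Proposition~\ref{pro:R_x} via the unitary identification $L^2(\Ga_y;\cF)\otimes_{B_y}\cX_{z_0}\cong L^2(Z_x;\cX)$; once that is granted, the rest of the theorem reduces to matrix bookkeeping and the standard full-corner/Morita dictionary.
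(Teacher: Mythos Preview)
Your proposal is correct and follows essentially the same route as the paper: both arguments compute the reduced norm via the fiberwise representations $\pi_\omega^\cM$ of Lemma~\ref{lem:pi_x}, split $L^2(\cM_\omega;\cL)$ according to $\cM_y=\Ga_y\sqcup Z^{-1}_y$ and $\cM_x=Z_x\sqcup\cG_x$, identify $\pi_y^\cM(\xi)=\pi_y^\Ga(\xi_{11})\oplus 0$ and $\pi_x^\cM(\xi)=R_x^\Ga(\xi_{11})\oplus 0$, and invoke Proposition~\ref{pro:R_x} for the crucial bound. Your write-up is in fact slightly more streamlined than the paper's, which first develops an $L^2(\cM;\cL)$-level unitary equivalence before turning to the pointwise $\pi_\omega^\cM$ computation; that detour is not needed once the norm equality is established directly, as you do.
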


\medskip

\begin{proof}
That the maps defined by~\eqref{eq:isom-conv_Ga--pMp} and~\eqref{eq:isom-conv_G--pMp} are isomorphisms of convolutions ${}^\ast$-algebras is obvious. \\
As previously, let us put $B:=\Co(\Gamo;\cF^{(0)})$ and $A:=\Co(\Gpdo;\cE^{(0)})$. Then \[\Co(\Mo;\cL^{(0)})\cong B\oplus A,\] as $\cstar$-algebras. Now, with respect to this decomposition, simple calculations show that 
\begin{eqnarray}
{}_{B\oplus A}\<\xi,\eta\>=\left({}_B\<\xi_{11},\eta_{11}\>+\Lip<\xi_{21}^\ast,\eta_{21}^\ast>_{|\Gamo}\right)\oplus \left(\Rip<\xi_{12},\eta_{12}>{}_{|\Gpdo}+{}_A\<\xi_{22},\eta_{22}\> \right),\end{eqnarray}
for all $\xi=\left(\begin{array}{ll}\xi_{11}&\xi_{12}\\ \xi_{21}&\xi_{22}\end{array}\right)$ and $\eta=\left(\begin{array}{ll}\eta_{11}&\eta_{12}\\ \eta_{21}&\eta_{22}\end{array}\right)$ in $\cC_c(\cM;\cL)$. In particular, suppose that $\xi=\left(\begin{array}{ll}\xi_{11}&0\\ 0&0\end{array}\right)\in p_\Ga\cC_c(\cM;\cL)p_\Ga$, then 
\[{}_{B\oplus A}\left\<\left(\begin{array}{ll}\xi_{11}&0\\ 0 &0\end{array}\right), \left(\begin{array}{ll}\xi_{11}&0\\ 0 &0\end{array}\right) \right\> ={}_B\<\xi_{11},\xi_{11}\>\oplus 0, \] 
so that 
\begin{eqnarray}
\left\|\left(\begin{array}{ll}\xi_{11}&0\\ 0 &0\end{array}\right)  \right\|_{L^2(\cM;\cL)}=\|\xi_{11}\|_{L^2(\Ga;\cF)};
\end{eqnarray}  
thus,~\eqref{eq:isom-conv_Ga--pMp} extends to an isometric $B$-linear map $u_\Ga$ of $B$-modules $$u_\Ga: L^2(\Ga;\cF)\To p_\Ga L^2(\cM;\cL)p_\Ga,$$ where $p_\Ga L^2(\cM;\cL)p_\Ga$ is the completion of $p_\Ga \cC_c(\cM;\cL)p_\Ga$ with respect to the norm of $L^2(\cM;\cL)$. 
Similarly, for $\xi_{22}\in \cC_c(\cG;\cE)$, we get 
\[{}_{B\oplus A}\left\<\left(\begin{array}{ll}0&0\\ 0 &\xi_{22}\end{array}\right), \left(\begin{array}{ll}0&0\\ 0 &\xi_{22}\end{array}\right) \right\> =0\oplus {}_A\<\xi_{22},\xi_{22}\>,  \]
and hence 
\begin{eqnarray}
 	\left\|\left(\begin{array}{ll}0&0\\ 0 &\xi_{22}\end{array}\right) \right\|_{L^2(\cM;\cL)}=\|\xi_{22}\|_{L^2(\cG;\cE)};
 \end{eqnarray} 
so that ~\eqref{eq:isom-conv_G--pMp} extends to an isometric $A$-linear map $u_\cG$ of $A$-modules $$u_\cG:L^2(\cG;\cE)\To p_\cG L^2(\cM;\cL)p_\cG.$$
Furthermore, since $u_\Ga$ and $u_\cG$ are surjective, then from ~\cite[Theorem 3.5]{Lan}, they are unitaries in $\cL_B(L^2(\Ga;\cF),p_\Ga L^2(\cM;\cL)p_\Ga)$ and $\cL_A(L^2(\cG;\cE),p_\cG L^2(\cM;\cL)p_\cG)$, respectively; in other words, $$L^2(\Ga;\cF) \approx p_\Ga L^2(\cM;\cL)p_\Ga$$ as Hilbert $B$-modules, and $$L^2(\cG;\cE)\approx p_\cG L^2(\cM;\cL)p_\cG$$ as Hilbert $A$-modules, here the sign "$\approx$" stands for \emph{unitarily equivalent}.
Moreover, it is very easy to see that the following diagrams commute:
\begin{eqnarray}
	\xymatrix{\cC_c(\Ga;\cF) \ar[d]^{\pi_l^\Ga} \ar[r]^\cong & p_\Ga \cC_c(\cM;\cL)p_\Ga \ar[d]^{\pi_l^\cM}\\
	\cL\left(L^2(\Ga;\cF)\right) \ar[r]^{\cong} & \cL\left(p_\Ga L^2(\cM;\cL)p_\Ga\right) } 
\ \ \ \ \ \  \ \ \ 	\xymatrix{\cC_c(\cG;\cE) \ar[d]_{\pi_l^\cG} \ar[r]^\cong & p_\cG\cC_C(\cM;\cL)p_\cG \ar[d]^{\pi_l^\cM}\\
	\cL\left(L^2(\cG;\cE)\right) \ar[r]^{\cong} & \cL\left(p_\cG L^2(\cM;\cL)p_\cG\right) }
\end{eqnarray}
It then only remains to check that for $\xi=\left(\begin{array}{ll}\xi_{11}&0\\0&0\end{array} \right)$ and $\eta=\left(\begin{array}{ll}0&0\\0&\eta_{22}\end{array}\right)$, we have $\|\xi\|_{\cstar_r(\cM;\cL)}=\|\xi_{11}\|_{\cstar_r(\Ga;\cF)}$ and $\|\eta\|_{\cstar_r(\cM;\cL)}=\|\eta_{22}\|_{\cstar_r(\cG;\cE)}$ which will lead to the desired isomorphisms of $\cstar$-algebras~\eqref{eq:isom-cstar_GaG--pMp} since $p_\Ga$ and $p_\cG$ are complementary (cf. Proposition~\ref{pro:p_Ga-p_G-compl}). However, by symmetry it suffices to check one of the latter equalities. To this end, we will use the constructions of Lemma~\ref{lem:pi_x}. \\
Note that we have \[\cC_c(\cM_\omega;\cL)=\left\{ \begin{array}{ll}\cC_c(\Ga_y;\cF)\oplus \cC_c(Z^{-1}_y;\overline{\cX}), & \rm{if} \ \omega=y\in \Gamo; \\
\cC_c(Z_x;\cX)\oplus \cC_c(\cG_x;\cE), & \rm{if} \ \omega=x\in \Gpdo\end{array}\right.\]
In other words, elements of $\cC_c(\cM_y;\cL)$, for $y\in \Gamo$ , are of the form $\left(\begin{array}{ll}\eta_{11} &0\\\eta_{21}&0\end{array}\right)$ with $\eta_{11}\in \cC_c(\Ga_y;\cF)$ and $\eta_{21}\in \cC_c(Z^{-1}_y;\overline{\cX})$, while elements of $\cC_c(\cM_x;\cL)$, for $x\in \Gpdo$, are of the form $\left(\begin{array}{ll}0&\eta_{12}\\0&\eta_{22}\end{array}\right)$ with $\eta_{12}\in \cC_c(Z_x;\cX)$ and $\eta_{22}\in \cC_c(\cG_x;\cE)$. Then, for all $y\in \Gamo$, and $\eta,\zeta\in \cC_c(\cM_y;\cL)$, one has 
\begin{align*}
\<\eta,\zeta\>_{B_y} = & \int_{\Ga_y}\eta_{11}(\g)^\ast \zeta_{11}(\g)^\ast d(\mu_\Ga)_y(\g) + \int_{Z^{-1}_y}{}_\cF\<\eta_{21}(\flat(z)),\zeta_{21}(\flat(z))\>d(\mu_{Z^{-1}})_y(\flat(z)),
\end{align*}
where $(\mu_{Z^{-1}})_y$ is the Radon measure on $Z^{-1}$ with support $Z_y^{-1}$, which is the image of $\mu^y$ on $Z$ under the "inversion" $Z^{-1}\To Z, \flat(z)\mto z$; it is then given by \[(\mu_{Z^{-1}})_y(\phi)=\int_{\cG^{\fr^\flat(\flat(z))}}\phi(\phi(g^{-1}\cdot \flat(z)))d\mu_\cG^{\fr^\flat(\flat(z))}(g), \ \rm{for} \ \phi\in \cC_c(Z^{-1}).\]
So, by using Notations~\ref{nota:inner-prod}, we get $\<\xi,\eta\>_{B_y}=\<\eta_{11},\zeta_{11}\>_{B_y}+\tRip<\eta_{21},\zeta_{21}>(y)$; hence $L^2(\cM_y;\cL)=L^2(\Ga;\cF)\oplus L^2(Z^{-1}_y;\overline{\cX})$. In the same way, we verify that $L^2(\cM_x;\cL)=L^2(Z_x;\cX)\oplus L^2(\cG_x;\cE)$. Thus, for all $\xi\in \cC_c(\cM;\cL)$, we have
\begin{eqnarray*}
	\|\xi\|_{\cstar_r(\cM;\cL)}=\text{max}\left\{\underset{y\in \Gamo}{\text{sup}}\|\pi_y^{\cM}(\xi)\|, \ \underset{x\in \Gpdo}{\text{sup}}\|\pi_x^{\cM}(\xi)\|\right\}.
\end{eqnarray*}
In particular, if $\xi=\left(\begin{array}{ll}\xi_{11}&0\\0&0\end{array}\right)\in \cC_c(\cM;\cL)$, and $y\in \Gamo$, then $\pi_y^\cM(\xi)=\pi_y^\Ga(\xi_{11})\oplus 0$, so that 
\begin{eqnarray}~\label{eq:norm_max}
\|\xi\|_{\cstar_r(\cM;\cL)}=\text{max}\left\{\|\xi_{11}\|_{\cstar_r(\Ga;\cF)}, \ \underset{x\in \Gpdo}{\text{sup}}\|\pi_x^\cM(\xi)\|\right\}.
\end{eqnarray}
Now, let $x\in \Gpdo$, and suppose $\eta\in \cC_c(\cM_x;\cL)$ is such that $\|\eta\|_{L^2(\cM_x;\cL)}\le 1$; i.e. $\text{max}\left\{\|\eta_{12}\|_{L^2(Z_x;\cX)}, \|\eta_{22}\|_{L^2(\cG;\cE)}\right\}\le 1$. Then, from a simple calculation we obtain
\[\<\pi_x^\cM(\xi)\eta,\pi_x^\cM(\xi)\eta\>_{A_x}=\Rip<\xi_{11}\cdot \eta_{12},\xi_{11}\cdot \eta_{12}>(x)=\Rip<R_x^\Ga(\xi_{11})\eta_{12},R_x^\Ga(\xi_{11})\eta_{12}>(x);\]
hence, by applying Proposition~\ref{pro:R_x}, we get $\|\pi_x^\cM(\xi)\eta\|_{L^2(\cM_x;\cL)}=\|R_x^\Ga(\xi_{11})\eta_{12}\|_{L^2(Z_x;\cX)}\le \|\xi_{11}\|_{\cstar_r(\Ga;\cF)}$. Therefore, from~\eqref{eq:norm_max}, we get $\|\xi\|_{\cstar_r(\cM;\cL)}=\|\xi_{11}\|_{\cstar_r(\Ga;\cF)}$.
\end{proof}

Recall (~\cite[p.14]{KMRW}) that if $\cA$ is a Dixmier-Douady bundle over $\cG$, and $\Ga\sim_Z\cG$, we define the \emph{pull-back} $\cA^Z$ over $\Ga$ as the quotient space of the pull-back $\fs^\ast \cA:=\{(z,a)\in Z\times\cA\mid \fs(z)=p(a)\}$ by $\cG$, where $\cG$ acts on $\fs^\ast \cA$ (on the right) by $(z,a)\cdot g:=(z\cdot g,\al_g^{-1}(a))$. 

\medskip

\begin{cor}~\label{cor:A_vs_A^Z}
Assume that $(\cA,\cG,\alpha)$ is a Dixmier-Douady bundle, and  that $\Ga\sim_Z\cG$. Then $\cA \rtimes_r\cG\sim_{Morita} \cA^Z \rtimes_r\Ga$.	
\end{cor}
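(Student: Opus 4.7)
My plan is to reduce the statement to Theorem~\ref{thm:Renault_equiv} by exhibiting an explicit equivalence of Fell systems $(\Ga, s^\ast \cA^Z) \sim_{(Z,\cX)} (\cG, s^\ast \cA)$, where $\cX := \fs^\ast\cA$ is the Banach bundle over $Z$ with fibre $\cX_z = \cA_{\fs(z)}$. Since $\fs$ is continuous and $\cA$ is locally trivial over $\Gpdo$, $\cX$ is manifestly a locally trivial Banach bundle (in fact a $\cK$-bundle) over $Z$.

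The plan is to define the four structural maps as follows. For the right action of $\cE := s^\ast \cA$ on $\cX$, with $(z,g) \in Z \ast \cG$, $u \in \cX_z = \cA_{r(g)}$ and $e \in \cE_g = \cA_{s(g)}$, I set $u \cdot e := \alpha_{g^{-1}}(u)\, e \in \cA_{s(g)} = \cX_{zg}$. For the left action of $\cF := s^\ast \cA^Z$, I use that the freeness of the $\cG$-action on $Z$ allows, for fixed $z$, every $f \in \cF_\g = (\cA^Z)_{\fr(z)}$ to be represented uniquely as $f = [z, b]$ with $b \in \cA_{\fs(z)}$; I then set $f \cdot u := b u \in \cA_{\fs(z)} = \cX_{\g z}$. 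The $\cE$-valued inner product is
\[
\langle u, u'\rangle_\cE := \alpha_{\langle z,z'\rangle_\cG^{-1}}(u^\ast)\, u' \in \cA_{\fs(z')} = \cE_{\langle z,z'\rangle_\cG},
\]
and the $\cF$-valued inner product is ${}_\cF\langle u, u'\rangle := [z', u (u')^\ast] \in (\cA^Z)_{\fr(z')} = \cF_{{}_\Ga\langle z,z'\rangle}$.

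The next step is to verify, one by one, the conditions of Definition~\ref{df:equiv-Fell}: bilinearity, associativity, and $\cE$- and $\cF$-linearity are immediate from the associativity of multiplication in $\cA$ and the cocycle identity $\alpha_{gh} = \alpha_g\circ\alpha_h$. Positivity $\langle u,u\rangle_\cE = u^\ast u \geq 0$ follows because $\langle z, z\rangle_\cG$ is a unit. Fullness of both inner products reduces to the fact that $\cA_{x}\cdot\cA_{x}$ spans a dense subspace of $\cA_x$ for each $x$, which is automatic since each fibre $\cA_x\cong \cK$. The main routine calculation to carry out explicitly is the compatibility condition
\[
{}_\cF\langle u, u'\rangle \cdot u'' = u\cdot \langle u', u''\rangle_\cE.
\]
Writing ${}_\cF\langle u,u'\rangle = [z'', \alpha_{\langle z'',z'\rangle_\cG}(u(u')^\ast)]$ on the left (choosing the representative over $z''$) and using $\langle z'', z'\rangle_\cG = \langle z', z''\rangle_\cG^{-1}$ on the right, both sides collapse to $\alpha_{\langle z'', z'\rangle_\cG}(u(u')^\ast)\, u''$.

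The only mildly delicate point is checking that the formulas are well defined and continuous under the quotient that defines $\cA^Z$; this is where the freeness and properness of the $\cG$-action on $Z$ are used to pick canonical representatives. Once this is in place, $(Z,\cX)$ is a Fell equivalence between $(\Ga, s^\ast\cA^Z)$ and $(\cG, s^\ast\cA)$, and Theorem~\ref{thm:Renault_equiv} immediately yields the Morita equivalence
\[
\cA^Z \rtimes_r \Ga \;=\; \cstar_r(\Ga; s^\ast\cA^Z) \;\sim_{\text{Morita}}\; \cstar_r(\cG; s^\ast\cA) \;=\; \cA\rtimes_r \cG,
\]
with explicit imprimitivity bimodule the completion of $\cC_c(Z;\cX)$ in the reduced norm, as described in the theorem.
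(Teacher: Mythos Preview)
Your proposal is correct and follows essentially the same route as the paper: both take $\cX=\fs^\ast\cA$ over $Z$, equip it with the right $s^\ast\cA$-action $(u,e)\mapsto\alpha_{g^{-1}}(u)e$, the left $s^\ast\cA^Z$-action $([z,b],u)\mapsto bu$, and the inner products $\langle u,u'\rangle_\cE=\alpha_{\langle z,z'\rangle_\cG}^{-1}(u)^\ast u'$ and ${}_\cF\langle u,u'\rangle=[\,\cdot\,,u(u')^\ast]$, then invoke Theorem~\ref{thm:Renault_equiv}. Your write-up in fact supplies more detail than the paper (which just says ``it is not hard to check''), in particular the explicit verification of the compatibility identity ${}_\cF\langle u,u'\rangle\cdot u''=u\cdot\langle u',u''\rangle_\cE$.
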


\begin{proof}
Observe that for $\g\in \Ga$, the fibre $(s_\Ga^\ast \cA^Z)_\g=\cA^Z_{s_\Ga(\g)}$ is identified with $(Z_{s_\Ga(\g)}\times_{\Gpdo}\cA)/\cG$. Consider the $\cstar$-bundle $\fs^\ast \cA\To Z$. Then, the Fell system $(\Ga,s_\Ga^\ast\cA^Z)$ acts on $(Z,\fs^\ast \cA)$ on the left via
\begin{eqnarray}~\label{eq:Left-action-A-Z-A}
\frac{Z_{s_\Ga(\g)}\times_{\Gpdo}\cA}{\cG}\times \cA_{\fs(z)} \ni ([z,a],b)\mto ab \in \cA_{\fs(\g z)}=\cA_{\fs(z)},\end{eqnarray}
where $(\g,z)\in \Ga\ast Z$. Also, we have a right Fell $\cG$-pair $(s_\cG^\ast \cA,\fs^\ast\cA)$ over $Z$ determined by the right action 
\begin{eqnarray}~\label{eq:Right-action-A-Z-A}
\cA_{\fs(z)}\times \cA_{s_\cG(g)}\ni (a,b)\mto \alpha_g^{-1}(a)b\in \cA_{\fs(zg)}=\cA_{s_\cG(g)}.\end{eqnarray}
Next, define the inner products in the obvious way: if $(z,\flat(z'))\in Z\times_{\Gpdo} Z^{-1}$, we set 
\begin{eqnarray}~\label{eq:Ga-A^Z-inner-product}
\cA_{\fs(z)}\times \overline{\cA_{\fs(z)}} \ni (a,\flat(b))\mto [z,ab^\ast]\in (s_\Ga^\ast \cA^Z)_{{}_\Ga<z,z'>}=\frac{Z_{s_\Ga({}_\Ga <z,z'>)}\times_{\Gpdo}\cA}{\cG},
\end{eqnarray}
and if $(\flat(z),z')\in Z^{-1}\times_{\Gamo}Z$, we put 
\begin{eqnarray}~\label{eq:G-A-inner-product}
\overline{\cA_{\fs(z)}}\times \cA_{\fs(z')}\ni (\flat(a),b)\mto \alpha_{<z,z'>_\cG}^{-1}(a)^\ast b \in (s_\cG^\ast\cA)_{<z,z'>_\cG}=\cA_{s_\cG(<z,z'>_\cG)}=\cA_{\fs(z')}.
\end{eqnarray}
It is not hard to check that the settings (~\ref{eq:Left-action-A-Z-A}), (~\ref{eq:Right-action-A-Z-A}), (~\ref{eq:Ga-A^Z-inner-product}), and (~\ref{eq:G-A-inner-product}) give an equivalence of Fell systems $(\Ga,s_\Ga^\ast \cA^Z)\sim_{(Z,\fs^\ast \cA)}(\cG,s_\cG^\ast \cA)$. We thus complete the proof by applying Theorem~\ref{thm:Renault_equiv}. 	
\end{proof}

\medskip

\begin{rem}
In particular, it results from the last corollary that twisted $K$-theory (~\cite{TXL}) is invariant under Morita equivalences of locally compact Hausdorff groupoids; i.e. if $\cA\in \mathbf{Br}(\cG)$, and $\Ga\sim_Z\cG$, one has $K^\ast_\cA(\cG)\cong K^\ast_{\cA^Z}(\Ga)$.	
\end{rem}


\section{The reduced $\cstar$-algebra of an $\uc$-central extension}

Let $\cG$ be groupoid. Recall that (~\cite{TXL}, ~\cite{KMRW}, ~\cite{Tu}) an \emph{$\uc$-central extension} of $\cG$ is a pair $(\xymatrix{\uc \ar[r] & \wGa \ar[r]^{\pi} & \Ga}, P)$, where $\xymatrix{\uc \ar[r] & \wGa \ar[r]^{\pi} & \Ga}$ is a central groupoid extension, and $\Ga\sim_P\cG$; that is $\wGa^{(0)}=\Gamo$, $\uc$ acts continuously on $\wGa$, and $\pi:\wGa\To \Ga$ is an $\uc$-principal bundle. Such an object is symbolized as $(\wGa,P)$ if there is no risk of confusion. We say that $(\wGa_1,P_1)$ and $(\wGa_2,P_2)$ are Morita equivalent if there exists an $\uc$-equivariant Morita equivalence $Z: \wGa_1 \To \wGa_2$~\footnote{Let $\pi_i:\wGa_i\To \Ga_i, i=1,2$ be an $\uc$-principal bundle. A generalised morphism $Z:\wGa_1 \To \wGa_2$ is said to be $\uc$-equivariant if there is an action of $\uc$ on $Z$ such that $(\lambda \tilde{\g_1})\cdot z\cdot \tilde{\g_2}=\tilde{\g_1}\cdot(\lambda z)\cdot\tilde{\g_2}=\tilde{\g_1}\cdot z\cdot(\lambda \tilde{\g_2})$, for any $(\lambda,\tilde{\g_1},z,\tilde{\g_2}) \in \uc \times \wGa_1 \times Z \times \wGa_2$ such that these products make sense. Is is shown (~\cite{TXL}) that such a morphism induces a generalized morphism $Z/\uc:\Ga_1\To \Ga_2$.} such that the following diagrams commute (in terms of generalized morphisms):

\[
\xymatrix{\Ga_1 \ar[r]^{Z/\uc} \ar[dr]_{P_1} &  \Ga_2 \ar[d]^{P_2} \\ & \cG}
\ \ \text{and} \ \ 
\xymatrix{\Ga_1 \ar[r]^{Z/\uc} \ar[dr]_{Z_{\delta_1}} & \Ga_2 \ar[d]^{Z_{\delta_2}} \\ & \ZZ_2}
\]

The set of Morita equivalence classes of $\uc$-central extensions of $\cG$ is an Abelian group denoted by $\mathbf{Ext}(\cG,\uc)$. Note that the inverse of a class $[\EE]$ in $\mathbf{Ext}(\cG,\uc)$ is the class of the \emph{opposite} $\EE^{op}$ defined as follows: if $\EE=(\wGa,P)$, then $\EE^{op}:=(\wGa^{op},P)$, where $\wGa^{op}$ is $\wGa$ as a topological groupoid but the $\uc$-action is the conjugate one; i.e. $t\cdot \tilde{\g}^{op}:=(t^{-1}\tilde{\g})^{op}$. It is known (~\cite{KMRW}, ~\cite{Tu1},~\cite{M},~\cite{TXL}) that elements of $\mathbf{Ext}(\cG,\uc)$ are in bijection to those of of the $2$-cohomology group $\check{H}^2(\cG_\bullet,\uc)$; more precisely, there is an ismorphism of abelian groups $\mathbf{Ext}(\cG,\uc)\cong \check{H}^2(\cG_\bullet,\uc)$; we refer to ~\cite{KMRW}, ~\cite{Tu1}, ~\cite{M} for more details. \\
Given an $\uc$-central extension $\EE=(\wGa,P)$ of $\cG$, we form the Fell system $(L,\Ga)$ as follows: $$L:=\wGa\times_{\uc}\CC:=(\wGa\times\CC)/_{(\tilde{\g},t)\sim (\lambda\cdot \tilde{\g},\lambda^{-1}t), \lambda\in \uc}.$$
If $[\tilde{\g},t]$ denote the class of $(\tilde{\g},t)\in \wGa\times\CC$ in $L$, then we get a line bundle over $\Ga$ by setting $L\ni [\tilde{\g},t]\mto \pi(\tilde{\g})\in \Ga$. Next, we define the multiplication and the ${}^\ast$-involution on $L$ as $L_{\g_1}\times L_{\g_2}\ni ([\tilde{\g}_1,t_1],[\tilde{\g}_2,t_2])\mto [\tilde{\g}_1\tilde{\g}_2,t_1t_2]\in L_{\g_1\g_2}$, for $(\g_1,g_2)\in \Ga^{(2)}$, and $L_\g\ni [\tilde{\g},t]\mto [\tilde{\g}^{-1},t^{-1}]\in L_{\g^{-1}}$, respectively.\\

\begin{df}
The \texttt{reduced $\cstar$-algebra} $\cstar_r(\EE)$ of $\EE$ is defined as the reduced $\cstar$-algebra of the Fell system $(L,\Ga)$; i.e. $\cstar_r(\EE):=\cstar_r(\Ga;L)$. 	
\end{df}

\medskip

\begin{pro}(Compare with ~\cite[Proposition 3.3]{TXL}).
Suppose $\EE_1\sim \EE_2$; i.e. $[\EE_1]=[\EE_2]$ in $\mathbf{Ext}(\cG,\uc)$. Then $\cstar_r(\EE_1)\sim_{Morita}\cstar_r(\EE_2)$.	
\end{pro}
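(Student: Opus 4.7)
My plan is to construct an explicit equivalence of Fell systems $(\Ga_1,L_1)\sim_{(Z/\uc,\cX)}(\Ga_2,L_2)$ and then invoke Theorem~\ref{thm:Renault_equiv}. By hypothesis there is a $\uc$-equivariant Morita equivalence $Z\colon \wGa_1\To \wGa_2$, which descends to a Morita equivalence $Z/\uc\colon \Ga_1\To \Ga_2$ compatible with $P_1$ and $P_2$ over $\cG$. Mimicking the recipe $L_i:=\wGa_i\times_{\uc}\CC$, I would set
\[ \cX:=Z\times_{\uc}\CC=(Z\times\CC)\big/\{(z,t)\sim(\lambda z,\lambda^{-1}t)\mid\lambda\in\uc\}, \]
so that $\cX\To Z/\uc$, $[z,t]\mapsto[z]$, is a line bundle of exactly the same shape as $L_1$ and $L_2$.

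Next, I would equip $\cX$ with left and right actions of $(\Ga_1,L_1)$ and $(\Ga_2,L_2)$ by
\[ [\tilde{\g}_1,s]\cdot[z,t]:=[\tilde{\g}_1\cdot z,\,st],\qquad [z,t]\cdot[\tilde{\g}_2,s]:=[z\cdot\tilde{\g}_2,\,ts], \]
and define $L_1$- and $L_2$-valued inner products by
\[ {}_{L_1}\<[z_1,s],\flat[z_2,t]\>:=[{}_{\wGa_1}\<z_1,z_2\>,\,s\bar t],\qquad \<\flat[z_1,s],[z_2,t]\>_{L_2}:=[\<z_1,z_2\>_{\wGa_2},\,\bar s t]. \]
Well-definedness on the $\uc$-quotients is the first routine check: it uses only the $\uc$-equivariance of the $\wGa_i$-actions on $Z$ (which propagates to $\uc$-equivariance of ${}_{\wGa_i}\<\cdot,\cdot\>$ and $\<\cdot,\cdot\>_{\wGa_i}$) together with $|\lambda|=1$ for $\lambda\in\uc$, so that all scalar factors cancel under every substitution $(z_i,t_i)\mapsto(\lambda_i z_i,\lambda_i^{-1}t_i)$.

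With these data, I would verify the four axioms of Definition~\ref{df:equiv-Fell}. Bilinearity, associativity, norm multiplicativity, and fullness of the Fell-pair actions descend directly from the Fell-bundle structures of $L_1$ and $L_2$ combined with the $\wGa_i$-equivariance of $Z$; fullness of both inner products is automatic because every fibre is one-dimensional over $\CC$. The involution identity $\<u,u'\>_{L_2}^\ast=\<u',u\>_{L_2}$ together with its left counterpart reduces to commutativity of $\CC$, while positivity follows from $s\bar s=|s|^2\ge 0$. The crucial compatibility of Definition~\ref{df:equiv-Fell}(ii),
\[ {}_{L_1}\<u_1,u'\>\cdot u_2 = u_1\cdot \<u',u_2\>_{L_2}, \]
reduces, after scalars cancel, precisely to the groupoid-level identity $z_1\cdot\<z',z_2\>_{\wGa_2}={}_{\wGa_1}\<z_1,z'\>\cdot z_2$ recalled in equation~(5) of \S3, valid in any Morita equivalence.

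Once the equivalence $(\Ga_1,L_1)\sim_{(Z/\uc,\cX)}(\Ga_2,L_2)$ is established, Theorem~\ref{thm:Renault_equiv} yields at once $\cstar_r(\EE_1)=\cstar_r(\Ga_1;L_1)\sim_{Morita}\cstar_r(\Ga_2;L_2)=\cstar_r(\EE_2)$. The main obstacle I foresee is the careful bookkeeping of the three interacting $\uc$-actions on $\wGa_1$, $\wGa_2$, and $Z$ when checking well-definedness and compatibility; however the one-dimensionality of the fibres trivialises all analytic content, so the proof amounts essentially to a direct application of the main equivalence theorem.
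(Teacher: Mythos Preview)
Your proposal is correct and follows exactly the same approach as the paper: set $\cX:=Z\times_{\uc}\CC$ over $Z/\uc$, verify that $(Z/\uc,\cX)$ implements an equivalence of Fell systems $(\Ga_1,L_1)\sim(\Ga_2,L_2)$, and then apply Theorem~\ref{thm:Renault_equiv}. In fact you supply considerably more detail than the paper, which simply asserts that the verification is easy; your explicit formulas for the actions and inner products and your outline of the compatibility check are all correct.
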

\begin{proof}
Suppose $\EE_i=(\xymatrix{\uc \ar[r] & \wGa_i \ar[r]^{\pi_i} & \Ga_i},\delta_i,P_i)$, and $L_i:=\wGa_1\times_{\uc}\CC$. If $Z:\wGa_1\To\wGa_2$ is an $\uc$-equivariant Morita equivalence, then take $\cX:=Z\times_{\uc}\CC=Z\times\CC/_{(z,t)\sim (\lambda z,\lambda^{-1}t)}$. Then, $\cX$ is a line bundle over $Z/\uc$, the projection being the map $[z,t]\mto [z]$, where $[z]$ is the class of $z$ in the quotient space $Z/\uc$. Furthermore, it is easy to verify that $(Z/\uc,\cX)$ implements an equivalence of Fell systems $\left(\Ga_1,L_1\right)\sim \left(\Ga_2,L_2 \right)$. Therefore, our assertion follows from Theorem~\ref{thm:Renault_equiv}.
\end{proof}

Let us now recall some constructions that we will need in the next result (see for instance ~\cite{KMRW,Tu,M} for more details). From an $\uc$-central extension $\EE=(\wGa,P)$ of $\cG$, one constructs a Dixmier-Douady bundle $(\cA_\EE,\cG,\al_\EE)$ in the following way. Let $\mu_{\wGa}=\{\mu_{\wGa}^y\}_{y\in \Gamo}$ be a Haar system on $\wGa$. For any $y\in \Gamo$, define the space $\cC_c(\wGa^y;\cH)^{\uc}$ of compactly supported continuous $\cH$-valued $\uc$-\emph{equivariant functions on} $\wGa^x$ as $$\cC_c(\wGa^y;\cH)^{\uc}:=\left\{\xi\in \cC_c(\wGa^y;\cH) \mid \xi(t\cdot \tilde{\g})=t^{-1}\xi(\tilde{\g}), \ \forall t\in \uc, \tilde{\g}\in \wGa^y\right\}.$$ Next, define a scalar product $\<\cdot,\cdot\>_y$ on $\cC_c(\wGa^y;\cH)^{\uc}$ by $\<\xi,\zeta\>_y:=\int_{\wGa^y}\<\xi(\tilde{\g}),\zeta(\tilde{\g})\>d\mu_{\wGa}^y(\tilde{\g})$. Denote by $\mathsf{H}^{\wGa}_y:=L^2(\wGa^y;\cH)^{\uc}$ the Hilbert space obtained by completing $\cC_c(\wGa^y;\cH)^{\uc}$ with respect to $\<\cdot,\cdot\>_y$, and let $\mathsf{H}^{\wGa}:=\coprod_{y\in \Gamo}\mathsf{H}^{\wGa}_y$. Then $\mathsf{H}^{\wGa}\To \Gamo$, being a countably generated continuous field of infinite-dimensional Hilbert spaces over the finite dimensional locally compact space $\Gamo$, is a locally trivial Hilbert bundle (cf. ~\cite[Th\'eor\`eme 5]{DD}). Moreover, $\wGa$ acts continuously and by unitaries on $\mathsf{H}^{\wGa}$ under the operation: $\cC_c(\wGa^{s(\g)};\cH)^{\uc}\ni \xi\mto \left(\tilde{\g}\cdot\xi:\wGa^{r(\g)}\ni \tilde{h}\mto \xi(\tilde{\g}^{-1}\tilde{h})\in \cH\right)\in \cC_c(\wGa^{r(\g)};\cH)^{\uc}$, for $\tilde{\g}\in \wGa$. Consider the continuous field of elementary $\cstar$-algebras $\cK^{\wGa}:=\coprod_{y\in \Gamo}\cK(\mathsf{H}^{\wGa}_y)$. Then $\cK^{\wGa}\To \Gamo$ is a locally trivial $\cstar$-bundle with fibre $\cK$, according to ~\cite[Th\'eor\`eme 8]{DD} (by comparing the field $\cH^{\wGa}$ with the trivial Hilbert bundle). Furthermore, there is a continuous action $\al$ of $\Ga$ by automorphisms on $\cK^{\wGa}$ given by: $\cK^{\wGa}_{s(\g)}\ni T\mto\tilde{\g}^{-1} T \tilde{\g}\in \cK^{\wGa}_{r(\g)}$, where $\tilde{\g}$ is any lift of $\g$ on $\wGa$, which gives us an element $(\cK^{\wGa},\Ga,\al)$ in $\mathbf{Br}(\Ga)$. Finally, we define $\cA_\EE$ over $\cG$ as the pull-back of $\cK^{\wGa}$ through the Morita equivalence $\cG\sim_{P^{-1}}\Ga$; i.e. $\cA_\EE:=(\cK^{\wGa})^{P^{-1}}$. This construction gives a homomorphism of abelian groups $\Psi:\mathbf{Ext}(\cG,\uc)\To \mathbf{Br}(\cG)$. Conversely, from a Dixmier-Douady bundle over $\cG$, it is not hard to build an $\uc$-central extension of $\cG$, and then to construct a homomorphism $\mathbf{Br}(\cG)\To \mathbf{Ext}(\cG,\uc)$ which is inverse to $\Psi$ (~\cite{KMRW}, ~\cite{TXL}). 

\begin{thm}
Let $\EE \in \mathbf{Ext}(\cG,\uc)$, and let $(\cA_\EE,\cG,\alpha_\EE)$ in $\mathbf{Br}(\cG)$ be its corresponding Dixmier-Douady bundle over $\cG$. Then, under the above constructions and notations, we have 
\[\cA_\EE\rtimes_r\cG \sim_{Morita} \cstar_r(\EE^{op}).\]
\end{thm}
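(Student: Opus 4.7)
The strategy combines Corollary~\ref{cor:A_vs_A^Z} with Theorem~\ref{thm:Renault_equiv}. Since $\Ga\sim_P\cG$, Corollary~\ref{cor:A_vs_A^Z} gives $\cA_\EE\rtimes_r\cG \sim_{Morita} \cA_\EE^P\rtimes_r\Ga$. By the very construction of the Dixmier--Douady bundle $\cA_\EE := (\cK^{\wGa})^{P^{-1}}$, the functoriality of pull-back (with $P$ and $P^{-1}$ being mutually inverse Morita equivalences) identifies $\cA_\EE^P$ with $\cK^{\wGa}$ in $\mathbf{Br}(\Ga)$. Hence it suffices to prove
\[
\cK^{\wGa}\rtimes_r\Ga \ \sim_{Morita} \ \cstar_r(\EE^{op}) = \cstar_r(\Ga; L^{op}).
\]

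I would establish this by exhibiting an explicit equivalence of Fell systems
\[
(\Ga, L^{op}) \ \sim_{(\Ga, \cX)} \ (\Ga, s_\Ga^\ast\cK^{\wGa})
\]
over the identity Morita equivalence $\Ga\sim_\Ga\Ga$ of Example~\ref{ex:reflexivity}, and then invoking Theorem~\ref{thm:Renault_equiv}. The bimodule bundle $\cX\To \Ga$ is the ``twisted Hilbert bundle'' with fibre
\[
\cX_\g \ := \ \wGa_\g \times_\uc \mathsf{H}^{\wGa}_{s(\g)},
\]
where the $\uc$-quotient is taken with respect to the diagonal action combining the $\uc$-action on $\wGa_\g$ with scalar multiplication on $\mathsf{H}^{\wGa}_{s(\g)}$, with signs dictated by the $\uc$-equivariance of the sections of $\mathsf{H}^{\wGa}$ defining $\cK^{\wGa}$. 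Equivalently, $\cX_\g$ is the Hilbert space of continuous $\uc$-equivariant maps $\wGa_\g\To \mathsf{H}^{\wGa}_{s(\g)}$. The left $L^{op}$-action is the natural ``scalar multiplication on the line factor'', given on representatives by $[\tilde\g_1,t]_{op}\cdot[\tilde\g_2,\xi] := [\tilde\g_1\tilde\g_2, t\xi]$, the opposite $\uc$-convention in $L^{op}$ being precisely what makes this well defined modulo $\uc$. The right $(s_\Ga^\ast\cK^{\wGa})_h = \cK(\mathsf{H}^{\wGa}_{s(h)})$-action intertwines $\mathsf{H}^{\wGa}_{r(h)}$ with $\mathsf{H}^{\wGa}_{s(h)}$ via a lift $\tilde h\in\wGa_h$ and then applies the operator, the scalar ambiguity in the lift being absorbed in the $\uc$-quotient. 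The $\cK^{\wGa}$-valued inner product is the fibrewise rank-one operator construction $\xi\otimes\bar\eta$, while the $L^{op}$-valued one couples the Hilbert inner product on $\mathsf{H}^{\wGa}_{s(\g)}$ with the canonical element of $L^{op}$ determined by the pair of chosen lifts. Fullness of both inner products follows from non-degeneracy of $\<\cdot,\cdot\>$ on $\mathsf{H}^{\wGa}$ together with a Weierstrass-type density argument in the spirit of the proof of Proposition~\ref{pro:R_x}, and the compatibility axiom (ii) of Definition~\ref{df:equiv-Fell} reduces to the standard bra-ket identity on Hilbert spaces.

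The main obstacle lies in the careful bookkeeping of the various $\uc$-twists, so that all four structural pieces on $\cX$ (the left $L^{op}$-action, the right $s_\Ga^\ast\cK^{\wGa}$-action, and the two inner products) are simultaneously compatible with a single $\uc$-equivariance convention on $\cX_\g$. In particular, one sees that switching from $\EE$ to $\EE^{op}$ is exactly what is needed to align the scalar twist induced by the $\uc$-action on $\mathsf{H}^{\wGa}$ with the line bundle $L$; this is the conceptual explanation for the appearance of the \emph{opposite} extension in the statement. Once this alignment is in place, verifying Definition~\ref{df:equiv-Fell} is routine, and Theorem~\ref{thm:Renault_equiv} yields the desired Morita equivalence, concluding the proof.
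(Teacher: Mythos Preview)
Your proposal is correct and follows essentially the same route as the paper: reduce via Corollary~\ref{cor:A_vs_A^Z} to an equivalence between $(\Ga,s^\ast\cK^{\wGa})$ and $(\Ga,L)$ over the identity Morita equivalence $\Ga\sim_\Ga\Ga$, build an explicit Fell bimodule from the Hilbert bundle $\mathsf{H}^{\wGa}$, then apply Theorem~\ref{thm:Renault_equiv}. The only notable difference is the choice of bimodule bundle: the paper takes the untwisted pullback $\cX:=s^\ast\mathsf{H}^{\wGa}$ (so $\cX_\g=\mathsf{H}^{\wGa}_{s(\g)}$) with $s^\ast\cK^{\wGa}$ acting on the left and $L$ on the right, whereas you take the twisted bundle $\cX_\g=\wGa_\g\times_\uc\mathsf{H}^{\wGa}_{s(\g)}$ with the roles of left and right reversed. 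The paper's untwisted choice is slightly cleaner, as the left $\cK^{\wGa}$-action $T\cdot\xi:=\tilde\g_2^{-1}T(\tilde\g_2\xi)$, the right $L$-action $\xi\cdot[\tilde\g,\lambda]:=\tilde\g^{-1}\cdot\lambda\xi$, and the two inner products (rank-one operators on one side, the Hilbert scalar product packaged with a lift on the other) can be written and checked directly without carrying the extra $\wGa_\g$ factor; the ``opposite'' in $\EE^{op}$ then emerges transparently from the $\uc$-equivariance convention $\xi(t\tilde\g)=t^{-1}\xi(\tilde\g)$ defining $\mathsf{H}^{\wGa}$.
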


\begin{proof}
Write $\EE=(\xymatrix{\uc \ar[r] & \wGa \ar[r]^\pi & \Ga}, Z)$, where $Z:\cG\To \Ga$ is a Morita equivalence. For the sake of simplicity, we will denote $\cA:=\cK^{\wGa}$. From Corollary~\ref{cor:A_vs_A^Z}, we have 
\[\cA_\EE\rtimes_r\cG \sim_{Morita} \cA\rtimes_r\Ga:=\cstar_r(\Ga;s^\ast \cK^{\wGa}). \]
Thus, we only have to show that 
\begin{eqnarray}
\cstar_r(\Ga;s^\ast \cA)\sim_{Morita} \cstar_r(\Ga;L)=:\cstar_r(\EE^{op}),  \ \text{where} \ L:=\wGa^{op}\times_{\uc}\CC.
\end{eqnarray}
However, again in view of the Renault's equivalence Theorem~\ref{thm:Renault_equiv}, it suffices to build an equivalence between the Fell systems $(\Ga,s^\ast\cA)$ and $(\Ga,L)$. \\
Consider the Banach bundle $\cX:=s^\ast \sfH^{\wGa}$ over $\Ga$ defined as the pull-back of the Hilbert $\wGa$-bundle $\sfH^{\wGa}\To \Gamo$ through the source map of $\Ga$. We claim that $\cX$ implements the desired equivalence over $\Ga$; that is, that  
\begin{eqnarray}
(\Ga,s^\ast\cA)\sim_{(\Ga,\cX)}(\Ga,L). 
\end{eqnarray}
From the $\wGa$-action on $\sfH^{\wGa}$ defined in the discussion before the theorem, we get a left action of $s^\ast\cA$ on $\cX$ given by
\begin{eqnarray}~\label{eq:action:K(H^Ga)_H^Ga}
\begin{array}{lll}
	\cK(L^2(\wGa^{s(\g_1)},\cH)^{\uc})\times L^2(\wGa^{s(\g_2)},\cH)^{\uc} & \To & L^2(\wGa^{s(\g_1\g_2)},\cH)^{\uc} \\
	\ \ \ \ \ \ \ \ \ \ \ \ \ \ \ \ \ (T \ \ \ \ \ \ \ \ , \ \ \ \ \ \ \ \ \xi)  & \mto & T\cdot \xi:=\tilde{\g}_2^{-1}T(\tilde{\g}_2\xi), \ \ \ 
\end{array}
\end{eqnarray}
 and a right action of $L$ on $\cX$ 
\begin{eqnarray}~\label{eq:action:H^Ga_L}
	\begin{array}{lll}
		L^2(\wGa^{s(\g_2)},\cH)^{\uc}\times \left(\wGa_{\g_3}^{op}\times_{\uc}\CC\right) & \To & L^2(\wGa^{s(\g_2\g_3)},\cH)^{\uc} \\
		\ \ \ \ \ \ \ \ \ \ \ \ \ \ \ \ \ (\xi \ \ \ \ , \ \ \ \ [\tilde{\g},\lambda]) & \mto & \xi\cdot [\tilde{\g},\lambda]:= \tilde{\g}_3^{-1}\cdot \lambda\xi
	\end{array}
\end{eqnarray}
where $\tilde{\g}_3$ is any lift of $\g_3$ in $\wGa$. The maps (~\ref{eq:action:K(H^Ga)_H^Ga}) and (~\ref{eq:action:H^Ga_L}) are continuous since the $\Ga$-actions are continuous. Also, they are full since the actions are, in fact, isomorphisms.\\
We now construct the $s^\ast\cA$-valued and $L$-valued inner products $\cX\ast \overline{\cX}\To s^\ast\cA_{{}_\Ga<}$ and $\overline{\cX}\times \cX\To L_{>_\Ga}$, respectively. Note that, as in Example~\ref{ex:reflexivity}, $\Ga^{-1}=\{\g^{-1}\mid \g\in \Ga\}$,  if $(\g,\flat(\g'))\in \Ga\times_{\Gamo}\Ga^{-1}$ (in other words, $s(\g)=s(\g')$), then ${}_{\Ga}<\g,\g'>=\g\g'^{-1}$, and if $(\flat(\g'),\g")\in \Ga^{-1}\times_{\Gamo}\Ga$ (i.e. $r(\g')=r(\g")$), then $<\g',\g">_{\Ga}=\g'^{-1}\g"$. We then define these inner products as 
\begin{eqnarray}~\label{eq:scalar-product:A_Ga<}
\begin{array}{lll}
    \cX_\g\times \overline{\cX}_{\g'} & \To & \cA_{s(\g\g'^{-1})}=\cK(L^2(\wGa^{r(\g')},\cH)^{\uc})\\
    (\xi,\flat(\eta)) & \mto & {}_{s^\ast \cA}\<\xi,\eta\>:=\theta_{\tilde{\g}'\xi,\tilde{\g}'\eta}	
\end{array}
\end{eqnarray}
where for $\zeta,\zeta'\in L^2(\wGa^y,\cH)^{\uc}$, $\theta_{\zeta,\zeta'}\in \cK(L^2(\wGa^y,\cH)^{\uc})$ is the rank one operator $$L^2(\wGa^y,\cH)^{\uc}\ni \zeta"\mto (\<\zeta',\zeta"\>_x)\zeta \in L^2(\wGa^y,\cH)^{\uc}, \ y\in \Gamo;$$ and 
\begin{eqnarray}~\label{eq:scalar-product:L_>_Ga}
	\begin{array}{lll}
	\overline{\cX}_{\g'}\times \cX_{\g"} & \To & L_{\g'^{-1}\g"}=\wGa_{\g'^{-1}\g"}^{op}\times_{\uc}\CC\\
	(\flat(\xi),\eta) & \mto & \<\xi,\eta\>_L:=\left[\tilde{\g'}^{-1}\tilde{\g}", \<\tilde{\g}'\xi,\tilde{\g}"\eta\>_{r(\g')} \right]		
	\end{array}
\end{eqnarray}
where, as usual, $\tilde{\g'}$ and $\tilde{\g}"$ are any lifts of $\g'$ and $\g"$, respectively. Recall that for $y\in \Gamo$, the scalar product $\<\cdot,\cdot\>(y)$ on $\sfH^{\wGa}_y=\cX_y$ is defined as \[\<\xi,\eta\>(y)=\int_{\wGa^y}\<\xi(\tilde{h}),\eta(\tilde{h})\>d\mu_{\wGa}^y(\tilde{h}) \in \CC.\]
The algebraic properties of these maps are easy to check. The map (~\ref{eq:scalar-product:A_Ga<}) is full, for $\text{span}\left\{\theta_{\zeta,\zeta'} \mid \zeta, \zeta'\in L^2(\wGa^{r(\g')},\cH)^{\uc}\right\}$ is the ideal of finite-rank operators on $L^2(\wGa^{r(\g')},\cH)^{\uc}$ and the map $L^2(\wGa^{s(\g')},\cH)^{\uc}\To L^2(\wGa^{r(\g')},\cH)^{\uc}$ given by the $\wGa$-action is an isomorphism of Hilbert spaces. The map~\eqref{eq:scalar-product:L_>_Ga} is clearly surjective. Thus, it only remains to verify that the compatibility condition (cf. Definition~\ref{df:equiv-Fell} (ii)) holds; that is, for any triple $(\g,\g'^{-1},\g")\in \Ga\times_{\Gamo}\Ga^{-1}\times_{\Gamo}\Ga$,
\begin{eqnarray}
\xi\cdot\<\xi',\xi"\>_L={}_{s^\ast\cA}\<\xi,\xi'\>\cdot\xi", \ \forall (\xi,\flat(\xi'),\xi")\in \cX_\g\times\overline{\cX}_{\g'}\times\cX_{\g"}.\end{eqnarray} 
One has
\begin{align*}
\xi\cdot \<\xi',\xi"\>_L & = \xi\cdot\left[\tilde{\g}'^{-1}\tilde{\g}",\<\tilde{\g}'\xi',\tilde{\g}"\xi"\>({r(\g')})\right] \\
 & =\tilde{\g}"^{-1}\tilde{\g}'\cdot (\<\tilde{\g}'\xi',\tilde{\g}"\xi"\>({r(\g')}))\xi\\
 & = \tilde{\g}"^{-1}\cdot(\<\tilde{\g}'\xi',\tilde{\g}"\xi"\>({r(\g')}))(\tilde{\g}'\xi) \\
 & = \tilde{\g}"^{-1}\cdot \theta_{\tilde{\g}'\xi,\tilde{\g}'\xi'}(\tilde{\g}"\xi") \\
 &=  {}_{s^\ast\cA}\<\xi,\xi'\>\cdot\xi",	
\end{align*}
which completes the proof.
\end{proof}


\addcontentsline{toc}{chapter}{\bibname}
\bibliography{}

\end{document}